\newtheorem{thm}{Theorem}
\DeclareMathOperator{\Des}{\mathrm{D}}
\DeclareMathOperator{\st}{\,|\,}
\DeclareMathOperator{\comaj}{\mathrm{comaj}}
\DeclareMathOperator{\SYT}{\mathrm{SYT}}
\DeclareMathOperator{\Lin}{\mathcal{L}}
\DeclareMathOperator{\ddeg}{\mathrm{ddeg}}
\DeclareMathOperator{\Z}{\mathbb{Z}}
\DeclareMathOperator{\krew}{Krew}
\DeclareMathOperator{\bfc}{{\bf c}}
\newcommand{\qbinom}{\genfrac{[}{]}{0pt}{}}
\newcommand{\rect}[2]{#1 \times #2}
\newcommand{\RPP}{\mathcal{RPP}}
\newtheorem{question}[thm]{Question}
\newtheorem{remark}[thm]{Remark}
\newtheorem{proposition}[thm]{Proposition}
\newtheorem{corollary}[thm]{Corollary}
\newtheorem{example}[thm]{Example}
\newtheorem{lemma}[thm]{Lemma}
\DeclareMathOperator{\motz}{Motz}
\DeclareMathOperator{\motzE}{Motz^1}
\DeclareMathOperator{\motzT}{Motz^2}
\DeclareMathOperator{\motzET}{Motz^{1,2}}
\DeclareMathOperator{\bal}{Bal}
\DeclareMathOperator{\cat}{Cat}
\newcommand{\bij}{\alpha}
\newcommand{\bijTwo}{\beta}
\title[Set-Valued Catalan Combinatorics]{Set-Valued Catalan Combinatorics}
\author[Lazar and Linusson]{Alexander Lazar and Svante Linusson}
\keywords{Catalan numbers, set-valued tableaux, pattern-avoidance}
\begin{document}

\begin{abstract}Set-valued standard Young tableaux are a generalization of standard Young tableaux due to Buch (2002) with applications in algebraic geometry. The enumeration of set-valued SYT is significantly more complicated than in the ordinary case, although product formulas are known in certain special cases. In this work we study the case of two-rowed set-valued SYT with a fixed number of entries. These tableaux are a new combinatorial model for the Catalan, Narayana, and Kreweras numbers, and can be shown to be in correspondence with both $321$-avoiding permutations and a certain class of bicolored Motzkin paths. We also introduce a generalization of the set-valued comajor index studied by Hopkins, Lazar, and Linusson (2023), and use this statistic to find seemingly new $q$-analogs of the Catalan and Narayana numbers.\end{abstract}
\maketitle

\section{Introduction}

\subsection{Set-Valued Tableaux}
Let $\lambda \vdash n$. A \emph{set-valued Young tableau} of shape $\lambda$ is a filling $S$ of the cells of the Ferrers diagram of $\lambda$ with nonempty sets of positive integers. They were introduced by Buch \cite{buch2002littlewood} to study the $K$-theory of the Grassmannian, and have since appeared in both algebro-geometric and combinatorial contexts (see, inter alia, \cite{anderson2017kclasses, chan2021euler, chan2018genera, drube2018setvalued, kim2020enumeration,reiner2018poset}). 

A set-valued Young tableau is \emph{standard} if:
\begin{enumerate}
\item The sets in the cells of $\lambda$ form a set partition of $[n+k]$ for some $k\geq 0$, and
\item If $u$ is (weakly) northwest of $v$ in $\lambda$ then $\max S(u) < \min S(v)$.
\end{enumerate}

We write $\SYT^{+k}(\lambda)$ for the set of set-valued standard Young tableaux of $\lambda$ with entries in $[n+k]$. Intuitively, a set-valued SYT $S$ can be thought of as an integer filling of $\lambda$ (filling each cell $u$ with $\min S(u)$) along with $k$ \emph{extra elements}. The combinatorics of these objects is much more intricate than in the ordinary case; there is no known analog of the hook-length formula for counting set-valued SYT in general, although Anderson, Chen, and Tarasca \cite{anderson2017kclasses} proved a determinantal formula for counting them.

For the purposes of enumerating the elements of $\SYT^{+k}(\lambda)$, it is sometimes useful to view a set-valued tableaux $S$ from a different perspective.

\begin{lemma}\label{SVSYTTriple}
A standard set-valued Young tableau of shape $\lambda$ is equivalent to the following data:
\begin{enumerate}
\item A standard Young tableau $S^*$ of shape $\lambda$,
\item A weak chain $\lambda^{\bullet}$ of subshapes 
$$\emptyset = \lambda_0 \subsetneq \lambda_1 \subseteq \cdots \subseteq \lambda_k \subseteq \lambda_{k+1} = \lambda$$
\item A choice of a corner cell $u_i$ of $\lambda_i$ for each $1\leq i \leq k$.
\end{enumerate}
\end{lemma}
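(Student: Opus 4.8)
The plan is to set up an explicit bijection by scanning the entries $1,2,\dots,n+k$ of $S$ in increasing order and recording, at each step, whether the current value is the minimum of its cell or an ``extra'' element. Two preliminary observations drive everything. First, if $u$ is weakly northwest of $v$ then $\min S(u)\le \max S(u)<\min S(v)$, so the cell-minima strictly increase in the northwest--southeast order; consequently standardizing the minima (replacing $\min S(u)$ by its rank among all cell-minima) produces a genuine standard Young tableau $S^*$ of shape $\lambda$. Second, for any threshold $t$ the set of cells $u$ with $\min S(u)\le t$ is an order ideal in the cell poset (if $u$ is opened and $w$ is weakly northwest of $u$, then $\min S(w)<\min S(u)\le t$), hence a Young subdiagram. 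Thus reading off the cells in order of their minima traces out exactly the saturated chain $\emptyset=\nu^{(0)}\lessdot\nu^{(1)}\lessdot\cdots\lessdot\nu^{(n)}=\lambda$ encoded by $S^*$.

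For the forward map I would define $S^*$ as above and then treat the $k$ extra elements. Listing them in increasing order of value as $e_1<\cdots<e_k$, let $\lambda_i$ be the Young subdiagram of cells opened strictly before $e_i$ (equivalently $\{u:\min S(u)<e_i\}$) and let $u_i$ be the cell containing $e_i$. Since $e_i$ is not a minimum we have $u_i\in\lambda_i$, and I claim $u_i$ is a removable corner (a corner cell in the sense of the statement) of $\lambda_i$: if some opened cell $w\neq u_i$ were weakly southeast of $u_i$, then $\min S(w)<e_i\le\max S(u_i)<\min S(w)$, a contradiction. The chain $\lambda_1\subseteq\cdots\subseteq\lambda_k$ is weakly increasing because $e_i$ increases, each $\lambda_i$ is one of the $\nu^{(j)}$, and $\lambda_1\neq\emptyset$ because the value $1$ is necessarily a cell-opening. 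This produces exactly the data in the statement, with each $\lambda_i$ a node of the chain of $S^*$ and $u_i$ a corner of it.

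The inverse map interleaves the $k$ ``decoration'' steps into the saturated chain of $S^*$: I would reconstruct the event sequence by inserting, for each $i$, a decoration of the corner $u_i$ at the stage where the built-up shape first equals $\lambda_i$ (several extras sharing a shape being inserted consecutively), and then scan the combined sequence assigning the values $1,2,\dots,n+k$ in order, an opening event setting the minimum of its cell and a decoration event adjoining its value to $S(u_i)$. The only nontrivial point is that this filling is standard, i.e.\ that $\max S(u)<\min S(v)$ whenever $u$ is weakly northwest of $v$; this holds because every value placed into $u$ (its minimum, and any extra, the latter only while $u$ is a removable corner) is assigned strictly before the cell $v$ lying southeast of $u$ is opened, so it is smaller than $\min S(v)$.

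The main obstacle, and the heart of the argument, is this equivalence between the defining order condition on $S$ and the condition that every extra element occupies a removable corner of the then-current shape; once it is isolated, checking that the forward and inverse maps are mutually inverse is routine bookkeeping. I would also be careful about the degenerate features of the encoding: that the chain is \emph{weak} (consecutive extras may occur at the same shape, and may even be placed in the same corner, corresponding to a cell receiving several extra elements), that $\lambda_i$ is allowed to equal $\lambda$ itself, and that each $\lambda_i$ must be compatible with $S^*$ in the sense of appearing in its saturated chain, which is precisely what guarantees the inverse map is well defined.
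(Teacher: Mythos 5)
Your proposal is correct and builds exactly the bijection the paper uses: the paper's proof is given in the more general language of set-valued linear extensions of a naturally labeled poset (\cref{Set_Valued_Lin_Bijection}), where the chain is encoded by the sizes $t_i=\#\lambda_i$ and the tableau is reconstructed by successively inserting $t_i+i$ into the chosen corner $p_i$ and shifting larger entries, which is precisely your interleaving of ``decoration events'' into the scan of values, and its key verification is the same equivalence you isolate between the standardness condition and each extra entry sitting in a removable corner of the then-current shape. Your remark that the $\lambda_i$ must lie on the saturated chain traced out by $S^*$ (equivalently, are determined by their sizes) is the correct reading of the lemma's somewhat loose statement and matches the paper's formulation via the $t_i$.
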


For now, consider the following illustrative example (a proof will be given in Section \ref{Poset-Section}).

\begin{example}\label{SVSYTEx}
Consider the following set-valued SYT $T \in \SYT^{+4}(3\times 4)$:
\begin{center}
$\begin{array}{|c|c|c|c|}
\hline
1 & 2 & 7 & 8\\
\hline
3 & 4, 5 & 11 & 13\\
\hline
6, 9, 10 & 12 & 14, 15 & 16\\
\hline
\end{array}$
\end{center}
There are cells with extra entries at matrix coordinates $(2,2)$, $(3,1)$, and $(3,3)$. Among these, the cell at $(2,2)$ has the smallest extra entry: $5$. We define $\lambda_1$ to be the subshape of $3\times 4$ for which the entries of $T$ are between $1$ and $5$:
\begin{center}
\begin{ytableau}
*(yellow) & *(yellow) & & \\
*(yellow) & *(yellow) \star & & \\
& & &
\end{ytableau}
\end{center}
The starred cell at matrix position $(2,2)$ is $u_1$.
The next extra entry is $9$, at matrix position $(3,1)$. We define $\lambda_2$ to be the subshape for which the entries of $T$ are between $1$ and $9$:
\begin{center}
\begin{ytableau}
*(yellow) & *(yellow) & *(blue) & *(blue)\\
*(yellow) & *(yellow) & & \\
*(blue) \star & & &
\end{ytableau}
\end{center}
Then since $9$ belongs to the starred cell $(3,1)$, we define that cell to be $u_2$. The next extra entry is $10$, which is in the same cell as the extra entry $9$. Then $\lambda_3 = \lambda_2$ and $u_3 = u_2$.

The last extra entry is $15$, at matrix position $(3,3)$. We have that $\lambda_4$ is the subshape $(4,4,3)$ consisting of the cells of $T$ whose entries are between $1$ and $15$.
\begin{center}
\begin{ytableau}
*(yellow) & *(yellow) & *(blue) & *(blue)\\
*(yellow) & *(yellow) & *(red) & *(red) \\
*(blue)  & *(red) &*(red) \star &
\end{ytableau}
\end{center}
Since $15$ belongs to the cell at position $(3,3)$, we define $u_4$ to be that corner cell. Finally, $\lambda_5$ is the entire shape.

We obtain $T^*$ from $T$ by removing the $4$ extra entries from $T$ and decrementing the remaining entries of the cells in $\lambda_i\setminus \lambda_{i-1}$ by $i-1$ for each $i$:
\begin{center}
\begin{ytableau}
1 & 2 & 6 & 7\\
3 & 4 & 8 & 10\\
5 & 9 & 11 & 12
\end{ytableau}
\end{center}
The entries in the yellow cells are decremented by $0$; those in the blue cells are decremented by $1$; those in the red cells are decremented by $3$ (notice there are no cells in $\lambda_3\setminus \lambda_2$); and the entry of the bottom right square is decremented by $4$.
\end{example}

This construction allows us to define a version of the \emph{comajor index} for set-valued tableaux.
Let $S \in \SYT^{+k}(\lambda)$, and decompose $S$ into $k$ skew tableaux $Y_1,\dots, Y_{k}$ (where $T_j$ has shape $\lambda_j\setminus \lambda_{j-1}$) and $k$ additional elements $x_1,\dots,x_k$ as in \cref{SVSYTEx}. A \emph{(natural) descent} of $Y_i$ is an entry $j$ of $Y_i$ such that $j+1$ is also an entry of $Y_i$ and is in a higher row\footnote{This is different from the usual definition of a descent in a Young tableau; our definition instead comes from the theory of $P$-partitions.}.

We write $\Des(Y_i)$ for the descent set of $Y_i$, and we define the \emph{set-valued descents} of $S$ to be $$\Des^{+k}(S) \coloneqq \bigsqcup D(Y_i) \sqcup \{x_1,\dots, x_k\}.$$

The \emph{set-valued comajor index} of $S$ is then defined as $$\comaj^{+k}(S) \coloneqq \displaystyle \sum_{x \in \Des^{+k}(S)}(n+k - x).$$

\begin{example}
Continuing from \cref{SVSYTEx}: 
$$D^{+4}(S) = \{6\}\sqcup\{12\}\sqcup\{5,9,10,15\},$$ so $\comaj^{+4}(S) = 38$.
\end{example}

The $k=1$ version set-valued comajor index was recently used by Hopkins, Lazar, and Linusson \cite{hopkins2021qenumeration} to find a product formula for $\displaystyle\sum_{S \in \SYT^{+1}(a\times b)}q^{\comaj^{+1}(S)}$ analogous to Stanley's hook-content formula. Our generalized version is motivated by the probabilistic reasoning used in \cite{hopkins2021qenumeration} --- when one attempts to extend these arguments to general $\SYT^{+k}$, the $\comaj^{+k}$ statistic emerges quite naturally and yields extensions of some of the results of \cite{hopkins2021qenumeration} to the general case. 

\subsection{Main Results}
The present work considers set-valued SYT from a different perspective. Rather than fixing the shape and the number of extra elements of a set-valued tableau $S$, we instead fix the number of rows and total number of elements.

This change of perspective has proven to be fruitful; if we restrict our attention to the case of two-row tableaux and fix the total number of elements while letting the number of columns vary, we obtain several new results:
\begin{itemize}
    \item For fixed $n$ and $i$, exact counts of $\bigsqcup_{2b-i+k = n}\SYT^{+k}(b,b-i)$ for all $0\leq i \leq b$. For $i=0$, that is, rows of equal length, it is the Catalan number (Equation~\eqref{eq:sv_syt_cat}) and for general $i$ it is a ballot number plus a binomial coefficient (Theorem~\ref{BallotThm}). 
    \item New models for the Catalan (Proposition \ref{prop:321_bij}), Narayana, and Kreweras (Corollary \ref{cor:321_bij2}) numbers (proved via a bijection with $321$-avoiding permutations).
    \item A new summation formula for the $321$-avoiding permutations by the number of peaks (Corollary \ref{321PeaksCor}).
    \item Exact counts of several families of lattice paths arising from these tableaux (Proposition \ref{MotzProp} and Theorem \ref{BallotThm}).
    \item Generating function formulas for weighted counts of several of these families (Section~\ref{Sec:GFun}).
    \item Seemingly-new families of $q$-Catalan and $q$-Narayana numbers (Section \ref{qCatSect}).
\end{itemize} 

Moreover, our study of the $q$-enumeration of the two-rowed case leads us to what we believe is the ``correct'' $q$-statistic for set-valued tableaux ($\comaj^{+k}$), about which we prove several general results:
\begin{itemize}
\item A probabilistic interpretation of $\comaj^{+k}$ which expresses the expected value of a simple statistic as a ratio between the $q$-count of set-valued SYT and the $q$-count of ordinary SYT (Theorem~\ref{thm:ExpectedComaj}).
\item An equidistribution result about the descent sets of elements of $\SYT^{+k}(\lambda)$ and $\SYT^{+k}(\lambda')$ (Theorem~\ref{thm:Equidistribution}).
\end{itemize}

Many of these results were announced (without proof) in the extended abstract \cite{PlusKFPSAC}.

\section*{Acknowledgements} 
Both authors were supported by grant 2018-05218 from VR the Swedish Science Council. The first author was supported by ARC grant ``From Algebra to Combinatorics, and Back''. The second author also by 2022-03875 from VR. The results of Section~\ref{Sec:GFun} were originally conjectured by the authors in their extended abstract for FPSAC 2024. We wish to thank Guillaume Chapuy and Pierre Bonnet for suggesting the methods used to prove them.

\section{Bijection to 321-avoiding Permutations}

In this section we will use a bijection to 321-avoiding permutations of length $n-1$ to prove that for any $n\ge 2$
\begin{equation} \label{eq:sv_syt_cat}
 \sum_{2b+k = n} \#\SYT^{+k}(\rect{2}{b}) = \cat(n-1).
\end{equation}
 A permutation $\pi=\pi_1\ldots\pi_n$ is called \emph{321-avoiding} if it does not have three elements $\pi_i>\pi_j>\pi_k$ for $ 1\le i<j<k\le n$. Another well-known way to describe 321 avoiding permutations is as follows. Recall that  a \emph{right-to-left minimum} in a permutation $\pi$ is an element $\pi_i$ such that $\pi_i<\pi_j$ for all $j>i$. The right-to-left minima of any permutation form an increasing sequence (when read from the left). The condition that a permutation is 321-avoiding is equivalent to asking that the elements 
that are not right-to-left minima  also form an increasing sequence. This characterization dates back to the early 1900s; see~\cite[Vol. I, Section V, Chapter III]{macmahon1915combinatory}. \footnote{The text considers $123$-avoiding permutations, which are the reverses of $321$-avoiding permutations.}
 Visualising the permutation with a permutation matrix, the right-to-left minima  will be on or below the main diagonal and the other elements above the diagonal. Forming a lattice path around the elements on or above the diagonal gives a direct bijection to south-east lattice paths above the diagonal, which is one of the many standard representations of Catalan objects. Alternatively, one can draw a lattice path below the right-to left minima and then rotate the drawing by a half turn. We also need to define an \emph{inner valley}\footnote{An inner valley differs from an ordinary valley in that neither the first nor the last position can be an inner valley.} in a permutation $\pi\in S_n$ as an element $\pi_j, 1<j<n$ such that $\pi_{j-1}>\pi_j<\pi_{j+1}$. 

\begin{example}\label{321Ex} The permutation $\pi = \ 3\  5 \ \overline 1\  \overline 2\  7\  8\ \overline 4\ {10}\ {11}\ \overline 6\ \overline9$ is $321$-avoiding. We overline the right-to-left-minima.
\end{example}
\begin{figure}
\begin{center}
    \includegraphics{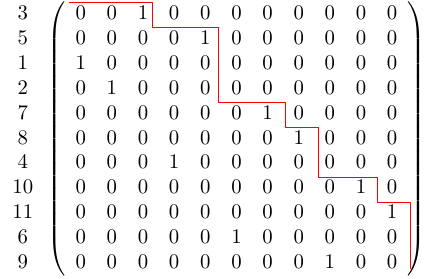}
\end{center}
\caption{The permutation matrix of $\pi$ from Example~\ref{321Ex}, along with its associated lattice path.}
\end{figure}

For a fixed $n$ we now define a map 
$$\bij: \displaystyle\bigsqcup_{2b+k = n} \SYT^{+k}(\rect{2}{b}) \mapsto \{\text{$321$-avoiding permutations of $[n-1]$}\}.$$ 
Let $T\in \SYT^{+k}(\rect{2}{b})$, with $k+2b=n$.
\begin{itemize}
\item [(1)] Remove the largest element $n$ from $T$, so it contains the numbers from 1 to $n-1$.
\item [(2)] The permutation $\bij(T)$ starts with all except the largest element in the top left box, followed by the entries of the box directly below it, and then the largest element of the top left box. The permutation 
continues with the elements in the second box in the top row except the largest, then all elements in the box below it, then the largest element in the second box in the top row. We continue in this way, placing the elements of the $i$th box from the left in the bottom row immediately before the largest element of the $i$th box in the top row.
\end{itemize}

\begin{example}
\ytableausetup{boxsize=2.7em}
\[
\begin{ytableau} 
 1,2 & 3,4,6 & 7 &10 \\ 5, 8& 9 & 11,12 &{13,14} 
 \end{ytableau}
\quad
\overset {\bij}\mapsto
 \pi= \overline 1 \ 5\   8\ \overline 2 \ \overline 3 \ \overline 4\ 9\   \overline 6\ 11\  12\  \overline 7\ 13\ \overline {10}
  \]
  \end{example}

The resulting permutation $\bij(T)$ will by construction have the numbers in the top row as its right-to-left minima. The elements in the bottom row (except $n$, which has been deleted) will form another increasing sequence. The permutation formed is thus $321$-avoiding. Note that the largest elements in the top boxes in columns $1,\dots, b-1$ will be inner valleys in the permutation and there are no other inner valleys. \footnote{We thank FindStat \cite{FindStat}, which helped us find that the refinement into columns was equidistributed with number of inner peaks. This equidistribution was a key insight into finding the bijection $\bij$.}

The inverse of $\bij$ is reasonably simple. Given a $321$-avoiding permutation $\pi$, we mark the right-to-left minima. They will be top elements. The inner valleys, which by definition are right-to-left minima, will be maximal elements in one box each. The other top elements will go into the same box as the inner valley closest to the right and the top elements to the right of rightmost inner valley (there must be such elements) will be the content of one more top box. So the number of columns will be one more than the number of inner valleys.  
Consecutive elements that are not right-to-left minima are placed in the same box in the bottom row and $n$ is added to the last box. The last box may be void of other elements, which happens exactly when the last two positions of $\pi$ do not form a descent. 

\begin{example} We reuse the permutation from Example~\ref{321Ex} to illustrate $\bij^{-1}$.
\ytableausetup{boxsize=2.3em}
\[
\pi = \ 3\  5 \ \overline 1\  \overline 2\  7\  8\ \overline 4\ {10}\ {11}\ \overline 6\ \overline {9}
\overset {\bij^{-1}}\mapsto
\quad
\begin{ytableau} 
 1 &2,4 & 6 &9 \\ 3,5& 7,8 & 10,11 &12
 \end{ytableau}
\]\end{example}

We summarize some basic properties of $\bij$. 
\begin{thm} \label{prop:321_bij} For all $n\geq 2$:
\begin{enumerate}
\item The map $\alpha$ is a bijection from $\displaystyle\bigsqcup_{2b+k = n} \SYT^{+k}(\rect{2}{b})$ to the set of $321$-avoiding permutations of $[n-1]$.
\item The elements in the top row of $T$ form the sequence of right-to-left minima in $\alpha(T)$.
\item If $T$ has $b$ columns, then $\alpha(T)$ will have $b-1$ inner valleys.
\end{enumerate} 
\end{thm}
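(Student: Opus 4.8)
The plan is to prove the three parts essentially by establishing that $\bij$ and the inverse map $\bij^{-1}$ described above are mutually inverse, and then reading off parts (2) and (3) as immediate consequences of the construction. The main work is part (1); parts (2) and (3) are really bookkeeping once the bijection is nailed down.

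First I would verify that $\bij(T)$ is genuinely a permutation of $[n-1]$ and is $321$-avoiding. Since $T \in \SYT^{+k}(\rect{2}{b})$ has entries forming a set partition of $[n]$ and we delete the entry $n$ in step~(1), the multiset of remaining entries is exactly $[n-1]$, and the reading procedure in step~(2) lists each of them exactly once, so $\bij(T)$ is a permutation of $[n-1]$. For the $321$-avoidance, I would argue as the text sketches: by the semistandardness condition (\emph{if $u$ is weakly northwest of $v$ then $\max S(u) < \min S(v)$}), reading the top-row boxes left to right produces their entries in increasing order, and likewise the bottom-row boxes. The construction emits, for each column $i$, first the non-maximal top entries, then all bottom entries, then the maximal top entry. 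I would check that the top-row maxima are exactly the right-to-left minima of $\bij(T)$: each such maximum $m$ in column $i$ is smaller than every entry appearing after it (those come from columns $\geq i$ in both rows, all of which exceed $m$ by semistandardness and the column/row ordering), while every non-maximal top entry and every bottom entry fails this. Hence the top-row entries form the increasing sequence of right-to-left minima and the bottom-row entries (read left to right) form the complementary increasing sequence; two interleaved increasing sequences cannot contain a $321$ pattern, giving $321$-avoidance.

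Next I would show $\bij^{-1} \circ \bij = \mathrm{id}$ and $\bij \circ \bij^{-1} = \mathrm{id}$. Given the identification of the top-row maxima with the right-to-left minima and of the inner valleys with precisely the top-row box maxima in columns $1,\dots,b-1$ (the maximum of the last top box is not an inner valley because nothing smaller follows it in the top row, but there are bottom entries and possibly $n$ after it), the recipe for $\bij^{-1}$ reconstructs the box boundaries: each inner valley closes a top box, the elements to its left that are not right-to-left minima belong to that same box, and the final segment of right-to-left minima after the last inner valley forms the last top box. Reinserting $n$ into the final bottom box and grouping maximal runs of non-minima into bottom boxes recovers $T$ exactly. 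I would carefully check the edge case flagged in the text --- when the last two positions of $\pi$ do not form a descent, the last bottom box of $T$ contains only $n$ --- to confirm the shape $\rect{2}{b}$ and the semistandard ordering are restored, and that $k = n - 2b$ is consistent. Running the two compositions and checking they return the starting object (both directions) establishes the bijection.

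Finally, parts (2) and (3) fall out directly. Part (2) was already verified above: the top-row entries are exactly the right-to-left minima of $\bij(T)$. For part (3), I would observe that the inner valleys of $\bij(T)$ are precisely the maximal top entries of columns $1,\dots,b-1$: for $1 \le i \le b-1$, the maximal top entry $m_i$ of column $i$ is preceded (if $i>1$, by the prior column's bottom entries; if $i=1$, by this column's own bottom entries) by a larger value and followed by the smaller non-maximal top entries of column $i+1$, so $\pi_{j-1} > \pi_j < \pi_{j+1}$ holds; meanwhile the maximum of the last column is followed only by larger bottom entries and is not an inner valley, and no non-maximal top entry or bottom entry can be an inner valley since the non-minima and the minima each form increasing subsequences. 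Thus a tableau with $b$ columns yields exactly $b-1$ inner valleys, completing the proof.

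The step I expect to be the main obstacle is the careful verification of the boundary cases in $\bij^{-1}$ --- in particular handling the possibly-empty last bottom box and confirming that the reconstruction always yields a valid shape $\rect{2}{b}$ with the correct value of $k$, so that $\bij^{-1}$ lands in the disjoint union $\bigsqcup_{2b+k=n}\SYT^{+k}(\rect{2}{b})$ rather than in some ill-formed filling.
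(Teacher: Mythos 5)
Your overall strategy --- show $\alpha(T)$ is a permutation of $[n-1]$ split into two interleaved increasing subsequences (hence $321$-avoiding), verify that the described inverse recovers $T$, and read off (2) and (3) from the identification of right-to-left minima and inner valleys --- is exactly the route the paper takes (its ``proof'' is the discussion surrounding the definition of $\alpha$). However, your argument for the key identification is internally inconsistent, and the inconsistency sits precisely at part (2). You assert that ``the top-row maxima are exactly the right-to-left minima of $\alpha(T)$'' and that ``every non-maximal top entry \ldots fails this,'' and then in the very next sentence conclude that \emph{all} top-row entries form the sequence of right-to-left minima. The latter is the true statement (and the one part (2) requires): every entry $x$ of a top box $(1,i)$, maximal or not, is followed in $\alpha(T)$ only by larger entries of that same box, by the entries of box $(2,i)$, and by entries of boxes in columns strictly greater than $i$, all of which exceed $\max S(1,i)\geq x$ because $(1,i)$ is weakly northwest of each of those boxes. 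So the justification you give for the box maxima applies verbatim to every top entry; as written, though, your stated reason would falsify part (2), and the ``Hence'' does not follow from the sentence preceding it.

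There are a few further direction and assignment slips that should be repaired even though the displayed inequalities you ultimately invoke are correct. In the reconstruction of $\alpha^{-1}$ you say the elements to the left of an inner valley ``that are not right-to-left minima belong to that same [top] box''; the non-right-to-left-minima are bottom-row elements, and it is the \emph{other right-to-left minima} that join the top box of the nearest inner valley to their right. In part (3), $m_i$ is immediately preceded by the bottom entries of column $i$ itself (not, for $i>1$, those of column $i-1$, which occur earlier), and the entries following $m_i$ (whether non-maximal top entries of column $i+1$ or, if that box is a singleton, bottom entries of column $i+1$) are \emph{larger} than $m_i$ --- which is what $\pi_{j}<\pi_{j+1}$ requires --- not smaller. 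None of this changes the architecture of the proof, which matches the paper's, but each statement needs correcting for the write-up to stand as a proof.
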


Recall from the theory of Catalan numbers that the number of Dyck paths of length $2n$ is counted by the Catalan number $\cat(n)$. The number of such paths with $m$ peaks
is the Narayana number $N_{n,m}=\frac{1}{m}\binom{n}{m-1}\binom{n-1}{m-1}$. 

There is even one further refinement. Let $c_i$ be the number of upsteps in the Dyck 
paths directly before peak number $i$ in the path, which gives a partition ${\bfc}=(c_1,\dots,c_m)$ of $n$, that is $\sum_ic_i=n$. Let ${\mu_j}$ be the 
number of $c_i$ equal to j $j$. Thus 
$\mu$ (or sometimes written $[1^{\mu_1}2^{\mu_2}\dots n^{\mu_n}]$)  is the {\bf type} of the composition $\bfc$ and of the Dyck path. The number of Dyck paths with $m$ peaks and
of type $\mu$ is known to be the Kreweras number $\krew(n,m,\mu)=\frac{n(n-1)\dots(n-m+1)}{\prod_j \mu_j!}$, from \cite{KrewerasNumbers}. 

In the bijection $\bij$, a tableau with $m$ elements in the top row will be mapped to a
321-avoiding permutation with $m$ right-to-left minima. As discussed above, we can draw a lattice path under these in the permutation matrix and by rotating half a turn obtain 
a bijection to Dyck paths with $m$ peaks.  
The first peak will have height $n$ minus the first right-to-left minima. The distance between two consecutive elements in the top row is mapped to the number of upsteps $c_i$ of the Dyck path for all the remaining peaks. This proves the following corollary.

\begin{corollary}    
\label{cor:321_bij2} For any $b,k\ge 1$ we have the following refinements:
\begin{equation}
\left|\left\{T \in \bigsqcup_{2b+k = n} \SYT^{+k}(\rect{2}{b}) \st \text{ $T$ has $m$ elts in top row}\right\}\right| = \frac{1}{m}\binom{n-1}{m-1}\binom{n-2}{m-1}
\end{equation}
\begin{equation}
\left|\left\{T \in \bigsqcup_{2b+k = n} \SYT^{+k}(\rect{2}{b}) \st \text{ $T$ has elts $a_1, \dots, a_m$ in top row}\right\}\right| = \krew(n,m,\mu),
\end{equation}
where $\mu$ has type $(c_1,\dots, c_m)$ given by $c_i = a_{i+1} - a_{i}$ with $a_{m+1} \coloneqq n$.
\end{corollary}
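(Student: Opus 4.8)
The plan is to prove both refinements by transporting the relevant statistics through the composite bijection that first applies $\bij$ (\cref{prop:321_bij}) and then the classical passage from $321$-avoiding permutations to Dyck paths recalled before the statement (draw the lattice path under the right-to-left minima and rotate a half turn). By \cref{prop:321_bij}, a tableau $T$ with $m$ elements in its top row corresponds to a $321$-avoiding permutation of $[n-1]$ whose right-to-left minima are exactly the top-row entries of $T$; in particular the \emph{values} $a_1 < \dots < a_m$ of the minima are recorded. So everything reduces to understanding how the number and the values of the right-to-left minima determine the associated Dyck path, and then quoting the Narayana and Kreweras enumerations.

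For the first identity, I would check that under the half-turn rotation each right-to-left minimum becomes a peak of the resulting Dyck path and conversely, so that permutations of $[n-1]$ with $m$ right-to-left minima are in bijection with Dyck paths of semilength $n-1$ with exactly $m$ peaks. The number of such paths is the Narayana number $\frac{1}{m}\binom{n-1}{m-1}\binom{n-2}{m-1}$, which is the claimed right-hand side. This step is essentially bookkeeping once the peak $\leftrightarrow$ right-to-left-minimum dictionary is fixed.

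For the second identity I would track the peaks quantitatively. The crucial structural fact is that the top-left cell of any $T \in \SYT^{+k}(\rect{2}{b})$ contains the entry $1$, so $a_1 = 1$ and $\sum_{i=1}^m c_i = \sum_{i=1}^m (a_{i+1}-a_i) = n - a_1 = n-1$, confirming that $\bfc = (c_1,\dots,c_m)$ is a genuine composition and $\mu$ a bona fide type. I would then show that the gaps $a_{i+1}-a_i$ between consecutive top-row entries are precisely the numbers of upsteps sitting before the successive peaks of the associated Dyck path, with due care for the order-reversal induced by the half-turn (which sends $a_1$ and $a_{m+1} \coloneqq n$ to opposite ends of the path) and for the boundary peak, whose height is $n$ minus the extreme right-to-left minimum. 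Since reversal preserves the type $\mu$, the tableaux in question are matched with Dyck paths of type $\mu$, and the Kreweras enumeration from \cite{KrewerasNumbers} yields $\krew(n,m,\mu)$.

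The step I expect to be the main obstacle is exactly this quantitative peak-matching: verifying, with the correct orientation, that the gap sequence $(a_{i+1}-a_i)$ is carried to the upstep-counts $(c_i)$ peak-by-peak. Because the half-turn rotation reverses left and right, one must be careful about which peak corresponds to which gap, how the deleted largest value $n$ enters through the convention $a_{m+1} \coloneqq n$, and how the extreme peak's height is accounted for; only after these endpoints and the orientation are pinned down do the quoted Narayana and Kreweras formulas finish the argument with no further computation.
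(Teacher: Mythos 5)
Your proposal follows essentially the same route as the paper: compose $\bij$ with the classical passage from $321$-avoiding permutations of $[n-1]$ to Dyck paths of semilength $n-1$, identify the right-to-left minima (equivalently, the top-row entries of $T$) with the peaks to get the Narayana count, and match the gaps $a_{i+1}-a_i$ (with $a_1=1$, $a_{m+1}=n$) to the upstep-counts $c_i$ before successive peaks to invoke the Kreweras enumeration. The paper's own argument is exactly this, stated in the paragraph preceding the corollary, and is no more detailed than yours on the orientation/endpoint bookkeeping you flag as the delicate step.
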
 
Due to possible fixed points, a refinement according to the elements in the bottom row is more difficult.

The bijection $\bij$ also implies the following.

\begin{corollary}
    \label{cor:More shapes} For $n\ge 3$
\begin{align}
\left|\bigsqcup_{2b+k+1 = n} \SYT^{+k}(b+1,b)\right|&=\cat(n)-\cat(n-1)=\frac{3}{n+1}\binom{2n-2}{n}\\
\left|\bigsqcup_{2b+k = n} \SYT^{+k}(b+1,b)/(1)\right|&=\cat(n)-2\cat(n-1)+\cat(n-2)
\end{align}
\end{corollary}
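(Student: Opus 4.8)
The plan is to deduce both identities from the rectangular count $\sum_{2b+k=m}\#\SYT^{+k}(\rect{2}{b}) = \cat(m-1)$ of \eqref{eq:sv_syt_cat} by two ``corner-peeling'' bijections. Throughout, write $R(m)$, $N(m)$, and $C(m)$ for the number of set-valued SYT with $m$ total entries of, respectively, rectangular shape $\rect{2}{b}$, near-rectangular shape $(b+1,b)$, and skew shape $(b+1,b)/(1)$, summed over all admissible $b$ and $k$; thus $R(m)=\cat(m-1)$, and the two assertions become statements about $N$ and $C$.

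First I would record the structural fact that in any two-row set-valued SYT the largest entry must lie in an outer corner of the shape (a cell with no cell to its southeast), because standardness condition \textup{(2)} forbids a strictly larger entry in such a position. For the rectangle $\rect{2}{(b+1)}$ there is a single outer corner, the bottom-right cell. I then split the $R(n+1)=\cat(n)$ rectangular tableaux on $n+1$ entries according to whether that corner cell is the singleton $\{n+1\}$ or contains smaller entries as well. Deleting the entire corner cell in the singleton case turns the shape $\rect{2}{(b+1)}$ into $(b+1,b)$ and sets up a bijection with the family counted by $N(n)$, while deleting only the maximum in the non-singleton case leaves a valid rectangle on $n$ entries, giving a bijection with the family counted by $R(n)=\cat(n-1)$. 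Hence $\cat(n)=N(n)+\cat(n-1)$, which is the first identity; the closed form $\tfrac{3}{n+1}\binom{2n-2}{n}$ then drops out of the routine simplification of $\tfrac{1}{n+1}\binom{2n}{n}-\tfrac{1}{n}\binom{2n-2}{n-1}$.

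For the second identity I would peel the opposite corner. The smallest entry of a shape-$(b+1,b)$ tableau must occupy the top-left cell $(1,1)$, the unique cell of $(b+1,b)$ with no cell weakly to its northwest, so I classify the $N$-tableaux by whether $(1,1)$ is the singleton $\{1\}$. Deleting that corner cell in the singleton case produces exactly the skew shape $(b+1,b)/(1)$ and identifies these tableaux with a $C$-family, while deleting only the minimum in the non-singleton case returns a shape-$(b+1,b)$ tableau with one fewer entry. This yields a finite-difference relation expressing the relevant skew count as a difference of consecutive values of $N$; substituting $N(m)=\cat(m)-\cat(m-1)$ then telescopes this into the second difference $\cat(n)-2\cat(n-1)+\cat(n-2)$, as claimed.

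The main obstacle is the bookkeeping that glues these steps together rather than any single bijection. I would need to check that each peeling map is a genuine bijection in both directions---that the cell adjoined or emptied always stays nonempty, that the resulting filling is again standard, and that relabelling preserves the set-partition structure of the entries---and, most delicately, to track the shift in the total number of entries caused by adjoining or removing an extremal element. It is precisely this shift that fixes whether the skew family in the corollary is to be read with $n$ or $n-1$ total entries, and hence whether the telescoped answer lands on $\cat(n)-2\cat(n-1)+\cat(n-2)$ with the stated indexing; reconciling this against the small-shape boundary cases (for instance $b=0$, where $(b+1,b)$ collapses to a single cell and $(b+1,b)/(1)$ becomes empty) is where I expect the argument to need the most care.
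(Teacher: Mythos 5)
Your argument for the first identity is the same as the paper's: classify the rectangular tableaux counted by $\cat(n)$ according to whether the maximum is alone in the unique outer corner of $\rect{2}{(b+1)}$, peel off either just the entry or the whole cell, and read the identity off from $\cat(n)=N(n)+\cat(n-1)$. For the second identity your route is genuinely, if mildly, different: the paper first applies the half-turn rotation $i\mapsto 2b+k+2-i$ to identify $\SYT^{+k}(b+1,b)$ with $\SYT^{+k}\bigl((b+1,b+1)/(1)\bigr)$ and then peels the maximum out of the bottom-right corner a second time, whereas you peel the minimum directly out of the cell $(1,1)$ of the shape $(b+1,b)$. The two arguments are equivalent under the rotation symmetry, and yours saves one bijection; both express the skew count as the second difference of the Catalan numbers. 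The point you rightly single out as delicate --- where the skew family should sit relative to $n$ --- deserves emphasis: both your argument and the paper's produce the count of $\SYT^{+k}\bigl((b+1,b)/(1)\bigr)$ with $n-1$ total entries, i.e.\ with $2b+k+1=n$, equal to $\cat(n)-2\cat(n-1)+\cat(n-2)$, whereas the corollary as printed indexes the disjoint union by $2b+k=n$. That indexing is off by one: already for $2b+k=3$ the shape $(2,1)/(1)$ consists of two incomparable cells and admits $2^3-2=6$ set-valued fillings, not $\cat(3)-2\cat(2)+\cat(1)=2$. So your proof is correct, and carrying out the bookkeeping you describe would in fact lead you to state the second identity with the shifted index (or, equivalently, with right-hand side $\cat(n+1)-2\cat(n)+\cat(n-1)$).
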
 
\begin{proof}
   We can always add $n$ to the bottom right box of a valid $\SYT^{+k}(b,b)$, where $2b+k=n-1$. Thus there are $\cat(n-2)$ set valued tableaux in which $n$ is not the only element in its box. This gives the first statement.

   There is a simple bijection between $\SYT^{+k}(b+1,b)$ and $\SYT^{+k}(b+1,b+1)/(1)$ by rotating the tableau a half turn and replacing each number $i$ with $2b+1+k+1-i$. Now, we can again subtract those where $n+1$ is not alone in its box. This gives $\cat(n)-\cat(n-1)-(\cat(n-1)-\cat(n-2))$ and the second statement follows.
     
\end{proof}

\section{Enumeration According to Peaks}

In \cite[Corollary 5.4]{anderson2017kclasses}, Anderson, Chen, and Tarasca give a formula for the Euler characteristic of a certain \emph{Brill--Noether space}, which they had earlier shown to be equal to the (signed) count of a certain class of set-valued tableaux. Specializing to the two row case and translating into our notation, their formula becomes:
\begin{equation}\label{ACTFormula}
\#\{\SYT^{+k}(2\times b)\} = \frac{1}{k!}\sum_{c=0}^{\lfloor\frac{k}{2}\rfloor}f^{(k-c,c)}f^{(b+k-c,b+c)}(b+k-c-1)_{k-c}(b+c-2)_{c},
\end{equation}
where $f^{\lambda}$ is the number of SYT of shape $\lambda$, and $(x)_a$ is the falling factorial $x(x-1)\cdots(x-a+1)$.

Let $a_{n,k}^{\mathrm{pk}}(321)$ be the number of $321$-avoiding permutations of $n=2b+k$ with exactly $k$ inner peaks (or inner valleys), we get the following result from our bijection above.

\begin{corollary}\label{321PeaksCor}
$$a_{n,k}^{\mathrm{pk}}(321) = \sum_{c=0}^{\lfloor\frac{k}{2}\rfloor}\frac{(k-2c+1)^2}{(k-c+1)(b+k-c+1)}\binom{b+c-2}{c}\binom{b+k-c-1}{b-1}\binom{2b+k}{b+c}.$$ 
\end{corollary}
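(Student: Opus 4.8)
The plan is to combine the bijection $\bij$ from \cref{prop:321_bij} with the Anderson--Chen--Tarasca formula \eqref{ACTFormula} and then perform a direct algebraic simplification. By \cref{prop:321_bij}, the map $\bij$ is a bijection from $\bigsqcup_{2b+k=n}\SYT^{+k}(\rect{2}{b})$ to the $321$-avoiding permutations of $[n-1]$, and part (3) tells us that a tableau with $b$ columns maps to a permutation with $b-1$ inner valleys. The quantity $a_{n,k}^{\mathrm{pk}}(321)$ counts $321$-avoiding permutations of $n=2b+k$ elements with exactly $k$ inner peaks; since inner peaks and inner valleys are equidistributed (as noted in the footnote after the statement of \cref{prop:321_bij}, and true for $321$-avoiding permutations by a standard symmetry), I would first reconcile the index conventions so that $a_{n,k}^{\mathrm{pk}}(321)$ corresponds precisely to $\#\SYT^{+k}(\rect{2}{b})$ with $2b+k=n$ under $\bij$.

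Once the identification $a_{n,k}^{\mathrm{pk}}(321) = \#\SYT^{+k}(\rect{2}{b})$ is in place, the corollary reduces to rewriting the right-hand side of \eqref{ACTFormula} into the claimed binomial form. First I would substitute the standard two-row hook-length evaluations: for a two-row shape $(p,q)$ with $p\geq q$ one has $f^{(p,q)} = \frac{p-q+1}{p+1}\binom{p+q}{q}$. Applying this to $f^{(k-c,c)}$ gives the factor $\frac{k-2c+1}{k-c+1}\binom{k}{c}$, and applying it to $f^{(b+k-c,b+c)}$ gives $\frac{k-2c+1}{b+k-c+1}\binom{2b+k}{b+c}$. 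Multiplying these two contributes the numerator $(k-2c+1)^2$ and the denominator $(k-c+1)(b+k-c+1)$, which already matches the leading rational factor in the target expression.

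The remaining work is to show that the product of the falling factorials and the $1/k!$ prefactor, together with the leftover binomial coefficients from the $f^{(k-c,c)}$ and $f^{(b+k-c,b+c)}$ evaluations, collapse to $\binom{b+c-2}{c}\binom{b+k-c-1}{b-1}$. Concretely, I would track how $\frac{1}{k!}\binom{k}{c}(b+k-c-1)_{k-c}(b+c-2)_{c}$ simplifies: the term $(b+c-2)_c$ divided by an appropriate factorial piece should yield $\binom{b+c-2}{c}$, while $\binom{k}{c}(b+k-c-1)_{k-c}/k!$ should consolidate into $\binom{b+k-c-1}{b-1}$ after rewriting the falling factorial as a ratio of factorials and cancelling. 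This is the step I expect to require the most care, since it involves reorganizing three factorial-type expressions simultaneously and verifying that the residual denominator is exactly $(k-c)!\,c!$ absorbed correctly; the main obstacle is bookkeeping rather than any conceptual difficulty.

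I would therefore structure the proof as: (i) invoke \cref{prop:321_bij} and the peak/valley equidistribution to equate $a_{n,k}^{\mathrm{pk}}(321)$ with the tableaux count; (ii) apply \eqref{ACTFormula}; (iii) expand each $f^{\lambda}$ via the two-row hook-length formula; and (iv) simplify the falling factorials and binomials term-by-term inside the sum over $c$ to reach the stated closed form. Since every summand transforms independently, it suffices to verify the identity at the level of a single fixed $c$, which keeps the algebra tractable.
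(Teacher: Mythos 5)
Your proposal is correct and follows essentially the same route as the paper: the paper's proof likewise substitutes the two-row hook-length evaluations into \eqref{ACTFormula} and simplifies the resulting factorials, with the $\frac{1}{c!(k-c)!}$ from $\binom{k}{c}/k!$ splitting exactly as you anticipate to produce $\binom{b+c-2}{c}$ and $\binom{b+k-c-1}{b-1}$. Your preliminary step of reconciling the indexing of $a_{n,k}^{\mathrm{pk}}(321)$ with the tableau count via $\bij$ (and peak/valley equidistribution) is a point the paper passes over silently, so including it is a reasonable addition rather than a deviation.
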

\begin{proof}
    Use the hook length formula to rewrite \eqref{ACTFormula} 
\begin{align}
\#\{\SYT&^{+k}(2\times b)\}\nonumber \\ &= \frac{1}{k!}\sum_{c=0}^{\lfloor\frac{k}{2}\rfloor}\frac{k!(k-2c+1)}{c!(k-c+1)!}\frac{(2b+k)!(k-2c+1)}{(b+c)!(b+k-c+1)!}\frac{(b+k-c-1)!}{(b-1)!}\frac{(b+c-2)!}{(b-2)!}, \nonumber
\end{align}
which simplifies to the claimed sum.
\end{proof}
This sort of closed form expression is a somewhat pleasant surprise; in \cite[Theorem 3]{Bukata2019DistributionsOS}, the authors give the following generating function as the central result of that paper.
\begin{equation}\label{PeaksGenFun}
\sum_{n\geq 0}\sum_{k\geq 0}a_{n,k}^{\mathrm{pk}}(321)q^kz^n = 1 + z\left(-\frac{-1 + \sqrt{-4z^2q+4z^2-4z+1}}{2z(zq-z+1)}\right)^2,\end{equation}
It is not at all obvious how one would obtain \cref{321PeaksCor} from \eqref{PeaksGenFun} (or vice-versa) using only elementary techniques.

\section{Motzkinlike and Ballotlike Paths}

In addition to $321$-avoiding permutations, we can interpret the $\SYT^{+k}(2\times b)$ in terms of a certain class of bicolored Motzkin paths. 
We recall that a \emph{Motzkin path} of length $n$ is a lattice path in $\Z^2$ from~$(0,0)$ to $(n,0)$ consisting of \emph{up steps} $U=(1,1)$, \emph{down steps} $D=(1,-1)$, and \emph{horizontal steps} $H=(1,0)$ in some order, with the property that the path never goes below the $x$-axis. 

We will color the horizontal steps of the Motzkin paths with $u$ (for upstairs or umber) and $d$ (downstairs or denim). We will consider the following two restrictions on the coloring:
\begin{itemize}
\item [(1)] umber horizontal steps do not occur at height zero;
\item [(2)] denim horizontal steps do not occur before the first down step.
\end{itemize}
We use $\motz(n)$ to denote the set of bicolored Motzkin paths of length $n$, and $\motzE(n)$, $\motzT(n)$, and $\motzET(n)$ to denote the set of paths which satisfies the restrictions (1), (2), and both (1) and (2) respectively.

Extending the well-known bijection between two-rowed rectangular SYTs and Dyck paths, we have the following bijection. A minimal entry is an entry that is smallest in its box.

\begin{thm} \label{thm:motzkin_bij}
There is a bijection $\bijTwo$ between $\SYT^{+k}(\rect{2}{b})$ and $\motzET(2b+k)$ with $k$ horizontal steps. A tableau $S$ maps to the path $\Gamma$ for which:
\begin{itemize} 
\item up steps of $\Gamma$ occur at the minimal entries of boxes in the first row of $S$;
\item down-steps of $\Gamma$ occur at the minimal entries in the second row; 
\item we color a horizontal step of $\Gamma$ umber if its index is a (non-minimal) entry of a box in the first row of $S$, and denim if its index is a (non-minimal) entry in the second row.
\end{itemize}
\end{thm}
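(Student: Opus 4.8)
The plan is to verify that $\bijTwo$ is well defined---that its image really lies in $\motzET(2b+k)$ and has exactly $k$ horizontal steps---and then to exhibit an explicit inverse. Throughout I read the entries $1,2,\dots,2b+k$ of $S$ in increasing order, so that index $j$ contributes the $j$-th step of $\Gamma=\bijTwo(S)$. For each column $i$ write $m_i$ and $m_i'$ for the minimal entries of its top and bottom boxes. Since the rows of a $\rect{2}{b}$ tableau increase, $m_1<\cdots<m_b$ and $m_1'<\cdots<m_b'$, and the columnwise northwest condition gives $m_i<m_i'$. Hence there are $b$ up steps and $b$ down steps (so $\Gamma$ ends at height $0$), the horizontal steps are exactly the $k$ non-minimal entries, and after reading index $j$ the height of $\Gamma$ equals $\#\{i:m_i\le j\}-\#\{i:m_i'\le j\}$.

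Well-definedness then reduces to reading the three defining conditions of $\motzET$ off the standardness inequalities. For nonnegativity, whenever the bottom minima $m_1'<\cdots<m_q'\le j$ have been seen, the inequalities $m_i<m_i'$ force $m_1,\dots,m_q\le j$ as well, so the height stays $\ge 0$. For restriction (1), an umber step at index $j$ lies in some top box $i$ with $m_i<j$ and (by $\max S(1,i)<m_i'$) with $j<m_i'$, so at least $i$ up steps and at most $i-1$ down steps have occurred by index $j$, whence the height is at least $1$. For restriction (2), a denim step at index $j$ lies in a bottom box with minimal entry $m_i'<j$, and $m_1'\le m_i'<j$ shows the first down step, which occurs at index $m_1'$, already precedes it.

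For the inverse I scan $\Gamma$ from left to right, maintaining running counts $p$ and $q$ of the up and down steps seen: an up step opens a new top box in column $p$ with minimal entry its index, a down step opens a new bottom box in column $q$, an umber step adjoins its index to the current top box (column $p$), and a denim step adjoins its index to the current bottom box (column $q$). Restriction (2) guarantees $q\ge 1$ at every denim step, and restriction (1) guarantees $p>q$ at every umber step, so the relevant box always exists. The crux---and the step I expect to be the main obstacle---is checking that this filling is a genuine element of $\SYT^{+k}(\rect{2}{b})$, i.e.\ that every northwest inequality holds. Here the height conditions do the work: because $p>q$ at an umber step, the $p$-th down step has not yet occurred, so every entry placed in top box $p$ precedes $m_p'$; combined with the ballot property (the $p$-th up step precedes the $p$-th down step) this yields the column condition $\max S(1,p)<m_p'$, and the remaining inequalities follow from monotonicity of the $m_i,m_i'$ together with the fact that horizontal steps attached to box $i$ fall strictly between consecutive same-direction steps.

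Finally I would confirm the two constructions are mutually inverse. The argument of the second paragraph shows that an umber (resp.\ denim) entry originally in column $i$ is read exactly when the up-step (resp.\ down-step) count equals $i$, which is precisely the placement rule used by the inverse; so applying $\bijTwo$ and then the inverse recovers $S$, while applying them in the other order reproduces each step of $\Gamma$ index by index. This identification of ``column index'' with ``number of same-direction steps seen so far'' is what makes the mutual-inverse check routine once well-definedness is established.
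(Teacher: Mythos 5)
Your proposal is correct and follows essentially the same route as the paper's proof: check that the two coloring restrictions are exactly the conditions forced by standardness, then invert by scanning the path left to right and attaching each step to the box determined by the running up/down counts. The paper only sketches these verifications, whereas you carry out the height computation and the check that the inverse produces a valid set-valued SYT in full detail; no changes are needed.
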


\begin{proof}
    
When the Motzkin path is on the x-axis there have been as many upsteps $U$ as downsteps $D$. Thus there cannot be a non-minimal entry in the upper row at this point, and hence no umber colored steps on the x-axis. The second restriction is due the fact that we cannot have a non-minimal element in the bottom row before we have had a minimal element in the first box, which precisely corresponds to having no denim colored step before the first down step. These two restrictions are the only restrictions and constructing the inverse of $\bijTwo$ is straightforward. This shows $\bijTwo$ is a bijection.
\end{proof}

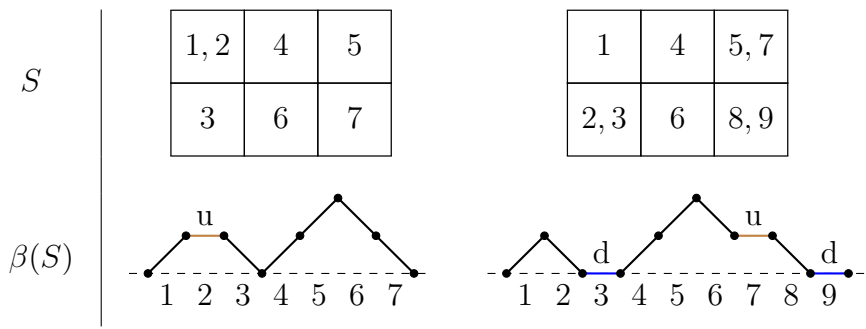
\begin{figure}[hb]
\begin{center}
\begin{tabular}{c|cc}
\begin{tabular}{c}$S$\end{tabular} & \begin{tabular}{c}{\begin{ytableau} 1,2 & 4 & 5 \\ 3 & 6 & 7 \end{ytableau}}\end{tabular} & \begin{tabular}{c}{\begin{ytableau} 1 & 4 & 5,7 \\ 2,3 & 6 & 8,9 \end{ytableau}}\end{tabular}\\ \\
\rule{0pt}{4ex}
\rule{0pt}{4ex}
\begin{tabular}{c}$\bijTwo(S)$\end{tabular} &\begin{tabular}{c}$
\begin{tikzpicture}[scale=0.5]
\draw[dashed] (-0.5,0) -- (7.5,0);
\draw[thick] (0,0) -- (1,1);
\draw[thick, brown] (1,1)--(2,1);
\node[above] at (1.5,1) {u}; 
\draw[thick] (2,1) -- (3,0) -- (4,1) -- (5,2) -- (6,1) -- (7,0);
\draw[fill] (0,0) circle (0.1);
\draw[fill] (1,1) circle (0.1);
\draw[fill] (2,1) circle (0.1);
\draw[fill] (3,0) circle (0.1);
\draw[fill] (4,1) circle (0.1);
\draw[fill] (5,2) circle (0.1);
\draw[fill] (6,1) circle (0.1);
\draw[fill] (7,0) circle (0.1);
\node[below] at (0.5,0) {1}; 
\node[below] at (1.5,0) {2}; 
\node[below] at (2.5,0) {3}; 
\node[below] at (3.5,0) {4}; 
\node[below] at (4.5,0) {5}; 
\node[below] at (5.5,0) {6}; 
\node[below] at (6.5,0) {7}; 
\end{tikzpicture}
$\end{tabular}
& \begin{tabular}{c}$
\begin{tikzpicture}[scale=0.5]
\draw[dashed] (-0.5,0) -- (9.5,0);
\draw[thick] (0,0) -- (1,1) -- (2,0);
\draw[thick,blue] (2,0) -- (3,0);
\node[above] at (2.5,0) {d}; 
\draw[thick] (3,0) -- (4,1) -- (5,2) -- (6,1);
\draw[thick,brown] (6,1) -- (7,1);
\node[above] at (6.5,1) {u}; 
\draw[thick] (7,1) -- (8,0);
\draw[thick,blue] (8,0) -- (9,0);
\node[above] at (8.5,0) {d}; 
\draw[fill] (0,0) circle (0.1);
\draw[fill] (1,1) circle (0.1);
\draw[fill] (2,0) circle (0.1);
\draw[fill] (3,0) circle (0.1);
\draw[fill] (4,1) circle (0.1);
\draw[fill] (5,2) circle (0.1);
\draw[fill] (6,1) circle (0.1);
\draw[fill] (7,1) circle (0.1);
\draw[fill] (8,0) circle (0.1);
\draw[fill] (9,0) circle (0.1);
\node[below] at (0.5,0) {1}; 
\node[below] at (1.5,0) {2}; 
\node[below] at (2.5,0) {3}; 
\node[below] at (3.5,0) {4}; 
\node[below] at (4.5,0) {5}; 
\node[below] at (5.5,0) {6}; 
\node[below] at (6.5,0) {7}; 
\node[below] at (7.5,0) {8}; 
\node[below] at (8.5,0) {9}; 
\end{tikzpicture}
$\end{tabular}
\end{tabular}
\end{center}
\caption{Examples of the bijection $\bijTwo$ between two-rowed rectangular set-valued SYTs and birestricted bicolored Motzkin paths.} \label{fig:motzkin}
\end{figure}
The first two equalities in the following proposition are well-known but the other two seem to be new. We construct a bijection that, when concatenated with $\bijTwo$ in Theorem \ref{thm:motzkin_bij}, gives us a second bijective proof of \eqref{eq:sv_syt_cat}.

\begin{proposition}\label{MotzProp} The Catalan numbers enumerate all four possible restriction on Motzkin paths.
\begin{align} \label{eq:motzT}
|\motz(n)|=&\cat(n+1)\\
|\motzE(n)|=&\cat(n)\\
|\motzT(n)|=&\cat(n)\\
|\motzET(n)|=&\cat(n-1), \text{ for }n\ge 2.
\end{align}
\end{proposition}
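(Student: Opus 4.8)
The plan is to prove the four identities of Proposition~\ref{MotzProp} by establishing a bijection between each class of bicolored Motzkin paths and a standard class of Dyck paths counted by the appropriate Catalan number, exploiting the structure of the two color restrictions. The unifying tool is the classical ``blow-up'' bijection that sends a bicolored Motzkin path of length $n$ to a Dyck path of length $2(n+1)$ (or $2n$, etc.) by replacing each step with a short segment of up/down steps according to a fixed dictionary. The key observation is that the two restrictions --- umber steps forbidden at height zero, and denim steps forbidden before the first down step --- translate into precisely the kind of boundary conditions that shave off one or two factors from the Catalan count.

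First I would treat $|\motz(n)| = \cat(n+1)$ and $|\motzE(n)| = \cat(n)$, which the authors flag as well-known, by citing or briefly recalling the standard encoding: send $U\mapsto UU$, $D\mapsto DD$, umber $H\mapsto UD$, denim $H\mapsto DU$ (or a similar local rule), and check that the height/nonnegativity conditions match up. For $\motzE$, the restriction ``no umber $H$ at height zero'' is exactly what is needed so that the image never touches the axis illegally, reducing the count by the usual factor that takes $\cat(n+1)$ down to $\cat(n)$. The identity $|\motzT(n)| = \cat(n)$ should follow by a symmetric argument: the denim restriction ``no denim $H$ before the first down step'' is the mirror-image condition, and one can either give an explicit involution-style symmetry swapping the two colors and reversing the path, or argue directly that the two single-restriction classes are equinumerous and then invoke the $\motzE$ count.

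The genuinely new case is $|\motzET(n)| = \cat(n-1)$, where \emph{both} restrictions hold simultaneously. Here the cleanest route is to compose the bijection $\bijTwo$ of Theorem~\ref{thm:motzkin_bij} with the bijection $\bij$ of Theorem~\ref{prop:321_bij}: since $\bijTwo$ identifies $\motzET(2b+k)$ with $\SYT^{+k}(\rect{2}{b})$ (with $k$ horizontal steps), and since Equation~\eqref{eq:sv_syt_cat} already gives $\sum_{2b+k=n}\#\SYT^{+k}(\rect{2}{b}) = \cat(n-1)$, summing over all valid $(b,k)$ with $2b+k = n$ yields $|\motzET(n)| = \cat(n-1)$ directly, for $n\ge 2$. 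This is the second bijective proof of \eqref{eq:sv_syt_cat} that the authors announced in the paragraph preceding the proposition.

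The main obstacle I anticipate is getting the local step-replacement dictionary exactly right so that the height conditions and the two color restrictions correspond precisely to the nonnegativity and endpoint constraints on the target Dyck paths; off-by-one errors in the path length and the placement of the boundary steps are easy to make, and one must verify that the restriction ``denim not before the first down step'' interacts correctly with ``umber not at height zero'' without double-counting or excluding legitimate paths. I would carefully track a small example (such as those in Figure~\ref{fig:motzkin}) through each bijection to pin down the conventions before asserting the general correspondence.
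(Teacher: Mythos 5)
Your treatment of the first two identities matches the paper's (the blow-up dictionary, with the initial $U$ and final $D$ dropped for $\motzE$), and your derivation of $|\motzET(n)|=\cat(n-1)$ by composing $\bijTwo$ with \eqref{eq:sv_syt_cat} is logically sound, since \eqref{eq:sv_syt_cat} was established independently via the bijection to $321$-avoiding permutations. Note, however, that this runs the logic backwards relative to the paper, which wants the path count as an \emph{independent} route to \eqref{eq:sv_syt_cat}; your argument therefore cannot also serve as ``the second bijective proof'' you claim it is. The genuine gap is the third identity, $|\motzT(n)|=\cat(n)$. Restriction (2) is \emph{not} the mirror image of restriction (1): restriction (1) is a height condition that can bite anywhere along the path, whereas restriction (2) constrains the prefix before the first down step, where a horizontal step may sit at any positive height. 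Concretely, reversing the path and swapping the colors sends $UuD\in\motzE(3)$ to $UdD$, whose denim step precedes the first down step, so $UdD\notin\motzT(3)$; the proposed involution is not a bijection from $\motzE(n)$ to $\motzT(n)$ (plain reversal without the color swap fails on $UdD\in\motzE(3)$ for the same reason). Your fallback --- ``argue directly that the two single-restriction classes are equinumerous'' --- is exactly the assertion that needs proof, so as written the $\motzT$ case is unproven.

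The paper closes this case with a different device: a length-reducing bijection $\phi:\motzT(n)\to\motz(n-1)$. Given $\Gamma\in\motzT(n)$ with at least one down step, locate the first down step; by restriction (2) its predecessor is either $U$ or an umber horizontal step, and $\phi$ contracts the pair, sending $uD$ to a single $D$ and $UD$ to a single denim horizontal step (the all-umber horizontal path of length $n$ is sent to the one of length $n-1$). Restriction (2) is precisely what makes $\phi$ invertible --- one re-expands the \emph{first} occurrence of $D$ or $d$ in a path of $\motz(n-1)$ --- and the same map restricts to a bijection $\motzET(n)\to\motzE(n-1)$, giving $|\motzT(n)|=\cat(n)$ and $|\motzET(n)|=\cat(n-1)$ in one stroke. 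To repair your write-up you either need an honest bijection between $\motzE(n)$ and $\motzT(n)$ (the naive symmetry is not one), or an argument of this contraction type.
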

 \begin{proof}
 Let us first repeat how to prove that $|\motz(n)|=\cat(n+1)$. The classical argument maps a path $\Gamma\in\motz(n)$ to a Dyck path of length $2n+2$ by starting with an upstep U, then doubling each step of $\Gamma$; each U of $\Gamma $ becomes a UU, each D becomes a DD, and umber and denim horizontal steps become DU and UD respectively. Finally, the Dyck path ends with a downstep. The fact that $|\motzE(n)|=\cat(n)$ is mentioned e.g. in \cite{stanley2015catalan} and we can  modify the standard bijection by just ignoring the starting U and ending D. Since horizontal steps on the x-axis cannot be colored umber, the Dyck path created will not go below the x-axis either. In both cases it is immediate how to construct the inverse.

 Let us now define a map $\phi$ from $\motzT(n)$ to  $\motz(n-1)$ as follows. First we map the path with $n$ horizontal steps labelled $u$ to the path with $n-1$ horizontal steps labelled $u$. All other paths $\Gamma\in\motzT(n)$ contain a downstep and we let $D^i$ be the first downstep in $\Gamma$. By the constraint (2), we know that the the step at position $i-1$ must be a $U$ or an horizontal step labeled $u$. Then we map the the steps in position $i-1,i$ to just one step:
\begin{center}
 \begin{tikzpicture}[scale=0.5]
 \draw[thick,brown] (0,1) -- (1,1);
 \node[above] at (0.5,0) {u}; 
 \draw[thick] (1,1) -- (2,0);
 \node at (3,0.5) {$\overset{\phi}\mapsto$};
 \draw[thick] (4,1) -- (5,0);
 \draw[thick] (0,2) -- (1,3)--(2,2);
 \node at (3,2.5) {$\overset{\phi}\mapsto$};
 \draw[thick, blue] (4,2) -- (5,2);
 \node[above] at (4.5,2) {d}; 
 \draw[fill] (0,1) circle (0.1);
 \draw[fill] (1,1) circle (0.1);
 \draw[fill] (2,0) circle (0.1);
 \draw[fill] (2,2) circle (0.1);
 \draw[fill] (1,3) circle (0.1);
 \draw[fill] (0,2) circle (0.1);
 \draw[fill] (4,1) circle (0.1);
 \draw[fill] (5,2) circle (0.1);
 \draw[fill] (5,0) circle (0.1);
 \draw[fill] (4,2) circle (0.1);
 \end{tikzpicture}
\end{center}
  Since $\Gamma$ is a Motzkin path we know that $i\ge 2$ and the map is well defined. The inverse is easy to define. We find the first occurrence of $D$ or horizontal step labeled denim in a path in $\motz(n-1)$ and expand it to two steps with by applying the reverse of the map above. This is the inverse exactly because paths in $\motzT(n)$ satisfy the constraint (2). The same map $\phi$ works equally well from $\motzET(n)$ to  $\motzE(n-1)$. We conclude:  

\smallskip\noindent
\emph{
The map $\phi$ is a bijection between $\motzT(n)$ and $\motz(n-1)$ and between from $\motzET(n)$ to  $\motzT(n-1)$. In particular
 $|\motzT(n)|=\cat(n)$ and $ |\motzET(n)|=\cat(n-1)$.
}

 Note that the number of horizontal steps is not preserved by $\phi$ but the sum of upsteps U and horizontal umber steps is decreased by one by $\phi$.
 \end{proof}

\subsection{Ballotlike paths}
We can also consider a larger class of paths which we call \emph{ballotlike}. A ballotlike path $P$ is a lattice path in the $1$st quadrant starting at $(0,0)$ and ending at $(n,i)$ which uses the steps $U = (1,1)$, $D = (1,-1)$, $u = (1,0)^{\mathrm{umber}}$ and $d = (1,0)^{\mathrm{denim}}$, subject to the same conditions on $u$ and $d$ steps from the definition of $\motzET(n)$. We write $\bal^*(n,i)$ for the set of ballotlike paths ending at $(n,i)$. The count of such paths turns out to be the sum of a classical ballot number and a binomial coefficient. 

\begin{thm}\label{BallotThm}
For any $(n,i)$ with $0\leq i \leq n$, we have
$$\left|\bal^*(n,i)\right| = \binom{2n-2}{n-i-1} - \binom{2n - 2}{n-i-2} + \binom{n-2}{n-i}.$$

Moreover, if we take the obvious extension of the bijection between $\motzET(n)$ and set-valued SYT of shape $2\times b$, we have for any $(n,i)$ with $0\leq i \leq n$,
$$\left| \bigsqcup_{2b+k-i = n} \SYT^{+k}(b,b-i)\right| = \binom{2n-2}{n-i-1} - \binom{2n - 2}{n-i-2} + \binom{n-2}{n-i}.$$
\end{thm}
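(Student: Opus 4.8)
The plan is to prove the path-counting formula for $|\bal^*(n,i)|$ directly; the statement about set-valued tableaux then follows at once, since the bijection $\bijTwo$ of \cref{thm:motzkin_bij} extends verbatim. Indeed, a tableau of shape $(b,b-i)$ with $k$ extra entries has $b$ minimal entries in the first row, $b-i$ minimal entries in the second row, and $k$ non-minimal entries, so the associated path has $b$ up-steps, $b-i$ down-steps, and $k$ horizontal steps; its length is $n=2b+k-i$, its terminal height is $i$, and restrictions (1)--(2) hold for exactly the same reasons as in the height-$0$ case. Summing over all admissible $(b,k)$ recovers every element of $\bal^*(n,i)$, so the two quantities agree and it suffices to count paths.

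To count $\bal^*(n,i)$ I would first split according to whether the path contains a down-step. A path with \emph{no} down-step uses only $U$- and umber-steps, because restriction (2) forbids denim-steps in the absence of a down-step; restriction (1) then forces the first step to be $U$, and the remaining $i-1$ up-steps and $n-i$ umber-steps may be interleaved freely. This gives $\binom{n-1}{i-1}=\binom{n-1}{n-i}$ such paths. For the paths that \emph{do} contain a down-step, I would extend the contraction map $\phi$ from the proof of \cref{MotzProp}: locate the first $D$, observe that by restriction (2) the step preceding it is $U$ or umber, and contract $[U][D]\mapsto[\mathrm{denim}]$ or $[\mathrm{umber}][D]\mapsto[D]$. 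A short check shows that $\phi$ decreases the length by one, preserves the terminal height (both contractions remove a net height change of $0$) and restriction (1), and always installs a down- or denim-step at the site of the contraction, which is necessarily the first non-$U$/non-umber step of the output. Running the construction backwards (expand the first down- or denim-step into $[\mathrm{umber}][D]$ or $[U][D]$ respectively) recovers a down-step path satisfying restriction (2), so $\phi$ is a bijection from the down-step paths of $\bal^*(n,i)$ onto those length-$(n-1)$, terminal-height-$i$ paths that satisfy restriction (1) alone and contain at least one down- or denim-step.

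It then remains to count the paths of length $n-1$ and height $i$ satisfying restriction (1) alone. For these I would reuse the doubling bijection from \cref{MotzProp} ($U\mapsto UU$, $D\mapsto DD$, umber $\mapsto DU$, denim $\mapsto UD$): restriction (1) is exactly the condition that the doubled $\pm1$-path never dips below $0$ inside a $DU$-block, so doubling identifies these paths with the nonnegative $\pm1$-paths of length $2n-2$ ending at height $2i$, of which the reflection principle counts $\binom{2n-2}{n-i-1}-\binom{2n-2}{n-i-2}$. Among them, the paths having \emph{no} down- or denim-step are again the $U$/umber-paths beginning with $U$, numbering $\binom{n-2}{i-1}$, and these are precisely the ones outside the image of $\phi$. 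Assembling the three counts gives
\[
|\bal^*(n,i)| = \Big(\underbrace{\binom{2n-2}{n-i-1}-\binom{2n-2}{n-i-2}}_{\text{restriction (1) only}}-\binom{n-2}{i-1}\Big)+\binom{n-1}{i-1},
\]
and Pascal's identity $\binom{n-1}{i-1}-\binom{n-2}{i-1}=\binom{n-2}{i-2}=\binom{n-2}{n-i}$ collapses this to the claimed formula.

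I expect the main obstacle to be the careful verification that the extended $\phi$ is a well-defined bijection with exactly the stated image: one must confirm that it preserves the terminal height and restriction (1), that its output always contains a down- or denim-step, and, most delicately, that the inverse expansion never reintroduces a denim-step before the first down-step (so that restriction (2) is restored). Everything else reduces to two elementary binomial counts and one application of the reflection principle, both of which are routine.
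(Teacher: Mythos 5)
Your argument is correct, and it takes a genuinely different route from the paper's. The paper splits $\bal^*(n,i)$ exactly as you do, into the $e_{n,i}=\binom{n-1}{i-1}$ paths with no down step and the $f_{n,i}$ paths with at least one, but it then handles $f_{n,i}$ by writing down transfer recursions ($f_{n,i}=f_{n-1,i-1}+2f_{n-1,i}+f_{n-1,i+1}+e_{n-1,i+1}$, with a boundary version at $i=0$) and verifying the guessed closed form $f_{n,i}=\binom{2n-2}{n-i-1}-\binom{2n-2}{n-i-2}-\binom{n-2}{n-i-1}$ by induction, which presupposes knowing the answer. You instead extend the contraction $\phi$ from the proof of \cref{MotzProp} to paths of positive terminal height and compose it with the doubling map and the reflection principle, so each term of the formula acquires a combinatorial meaning: the ballot difference counts the restriction-(1)-only paths of length $n-1$ ending at height $i$, and $\binom{n-2}{n-i}$ is the Pascal difference $\binom{n-1}{i-1}-\binom{n-2}{i-1}$ between the pure $U$/umber paths of lengths $n$ and $n-1$. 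Your identification of the image of the extended $\phi$ (restriction-(1) paths of length $n-1$ containing a down or denim step) and of its complement checks out, as does the key observation that nonnegativity of the doubled path at the midpoint of each $DU$-block encodes restriction (1) exactly; the inverse expansion indeed restores restriction (2) because everything preceding the first down-or-denim step of the image is a $U$ or umber step. The trade-off is that the paper's verification is shorter to write, while yours is self-contained, explains where the formula comes from, and reuses machinery already built for \cref{MotzProp}. (Both your proof and the paper's are slightly cavalier about degenerate binomial coefficients when $n\le 1$; for $n\ge 2$ everything is consistent under the convention that $\binom{a}{b}=0$ when $b<0$ or $b>a$.)
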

\begin{example}\label{Thm13} When $n=4$ and $i=2$ we have 6 set-valued SYT.
\begin{center}
\begin{tabular}{ccc}
    $\begin{array}{|c|c|c|}
    \hline
    1 & 2 & 4\\
    \hline
    3 \\
    \cline{1-1}
    \end{array}$
&
    $\begin{array}{|c|c|c|}
    \hline
    1 & 3 & 4\\
    \hline
    2 \\ 
    \cline{1-1}
    \end{array}$ 
&
    $\begin{array}{|c|c|c|}
    \hline
    1 & 2 & 3\\
    \hline
    4\\
    \cline{1-1}
    \end{array}$
\\
    $\begin{array}{|c|c|c|}
    \hline
    \cline{1-1}
    \end{array}$\\
    $\begin{array}{|c|c|}
    \hline
    1,2,3 & 4 \\
    \hline
    \end{array}$
&
    $\begin{array}{|c|c|}
    \hline
    1,2 & 3,4 \\
    \hline
    \end{array}$
&
    $\begin{array}{|c|c|}
    \hline
    1& 2,3,4 \\
    \hline
    \end{array}$
\end{tabular}
\end{center}
\end{example}

To prove Theorem \ref{BallotThm}, we first let $e_{n,i}$ be the number of ballotlike paths from $(0,0)$ to $(n,i)$ without a $D$ step, and by restriction (2) no $d$ steps either. Let $f_{n,i}$ be the number of ballotlike paths from $(0,0)$ to $(n,i)$ with at least one D step. Since the paths counted by $e_{n,i}$ only uses $U$ and $u$ and the first step cannot be $u$ by restriction (1) it it follows that $e_{n,i}=\binom{n-1}{i-1}$ for $n,i\ge 1$ and $e_{n,0}=0$ for $n\ge 1$. If there has already been a $D$ step in a ballotlike path we can, when $i\ge 1$, come to $(n,i)$ by any of the four steps. For $f_{n,i}, n> i\ge 1$ we thus get the recursion

\[
f_{n,i}=f_{n-1,i-1}+2f_{n-1,i}+f_{n-1,i+1}+e_{n-1,i+1}.
\]
For $f_{n,0}, n\ge 2$ we similarly get the recursion

\[
f_{n,0}=f_{n-1,0}+f_{n-1,1}+e_{n-1,1}.
\]
The problem then reduces to the following formula.
\begin{lemma}
    For $n\ge i\ge 0$ we have $f_{n,i}=\binom{2n-2}{n-i-1} - \binom{2n - 2}{n-i-2} - \binom{n-2}{n-i-1}$.
\end{lemma}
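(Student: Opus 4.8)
The plan is to prove the formula by induction on $n$, writing $g_{n,i}$ for the proposed closed form and showing that $g$ satisfies the same recursions and base cases as $f$. It is convenient to split $g_{n,i} = B_{n,i} - C_{n,i}$, where $B_{n,i} = \binom{2n-2}{n-i-1} - \binom{2n-2}{n-i-2}$ is a ballot-type number and $C_{n,i} = \binom{n-2}{n-i-1}$ is the correction term, and to use the formula $e_{n,i} = \binom{n-1}{i-1}$ already established above. Throughout I adopt the standard convention $\binom{a}{b}=0$ whenever $b<0$ or $b>a$, which lets the recursions and the formula extend to the boundary values without extra bookkeeping. For the base case $n=1$, no length-one ballotlike path can contain a $D$ step (restriction (1) forces the first step to be $U$), so $f_{1,i}=0$, matching $g_{1,i}=0$; I would also verify $n=2$ directly, since the binomial conventions are slightly delicate there (e.g. $f_{2,0}=e_{1,1}=1=g_{2,0}$).

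For the interior inductive step ($n>i\ge 1$) I must show that $g$ satisfies $g_{n,i}=g_{n-1,i-1}+2g_{n-1,i}+g_{n-1,i+1}+e_{n-1,i+1}$. Since the $B$- and $C$-parts are additive I treat them separately. Applying Pascal's rule twice, in the form $\binom{2n-2}{m}=\binom{2n-4}{m}+2\binom{2n-4}{m-1}+\binom{2n-4}{m-2}$, to both terms of $B_{n,i}$ shows after cancellation that $B_{n,i}=B_{n-1,i-1}+2B_{n-1,i}+B_{n-1,i+1}$ exactly, so the ballot part contributes no surplus. It then remains to check that the correction part absorbs the $e$-term, i.e. that $C_{n,i}=C_{n-1,i-1}+2C_{n-1,i}+C_{n-1,i+1}-e_{n-1,i+1}$; rewriting $C_{n,i}=\binom{n-2}{i-1}$ and expanding the right-hand side by double Pascal collapses it to $\binom{n-1}{i}-\binom{n-2}{i}=\binom{n-2}{i-1}$, which is one more application of Pascal's rule. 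Combining the two computations yields the interior recursion for $g$.

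The remaining and slightly more subtle point is the boundary recursion at $i=0$, where $f_{n,0}=f_{n-1,0}+f_{n-1,1}+e_{n-1,1}$ (there is no arrival from below, and an umber step is forbidden at height zero). Here $C_{n,0}=0$, so $g_{n,0}=B_{n,0}$, and after substituting $C_{n-1,0}=0$, $C_{n-1,1}=1$, and $e_{n-1,1}=1$ the correction and $e$ terms cancel, reducing the claim to $B_{n,0}=B_{n-1,0}+B_{n-1,1}$. Expanding both sides by double Pascal leaves exactly the discrepancy $\binom{2n-4}{n-1}-\binom{2n-4}{n-3}$, which vanishes by the symmetry $\binom{2n-4}{n-1}=\binom{2n-4}{n-3}$. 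This reflection at the wall $i=0$ is the one genuinely nonroutine ingredient, and I expect it to be the main (though mild) obstacle, together with pinning down the $n=2$ edge conventions. With the base case, the interior step, and the $i=0$ step all verified, induction on $n$ gives $f_{n,i}=g_{n,i}$ for all $n\ge i\ge 0$, completing the proof.
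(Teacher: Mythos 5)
Your proof is correct and follows the same route as the paper, whose entire argument is the one-line assertion that the formula ``is direct to prove from the above recursions and the values of $e_{n,i}$'' --- you have simply carried out that verification in full, including the reflection identity $\binom{2n-4}{n-1}=\binom{2n-4}{n-3}$ needed at the $i=0$ wall. The only quibble is the edge case $n=1$, $i=0$: under your stated convention $\binom{-1}{0}=0$ the closed form evaluates to $1$ rather than $f_{1,0}=0$, so you should either use the standard convention $\binom{-1}{0}=1$ or start the induction at $n=2$ and dispose of $n\le 1$ (and the diagonal $f_{n,n}=0$) by hand.
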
 
    \begin{proof}
        Having found the formula it is direct to prove it from the above recursions and the values of $e_{n,i}$.
    \end{proof}

    \noindent
    \emph{Proof of Theorem \ref{BallotThm}.}
        We get the formula by $\#\bal^*(n,i)=e_{n,i}+f_{n,i}$. The second statement follows from bijection 
       $\bijTwo$ since a path ending at height $i$ corresponds to a two-rowed set-valued SYT with $i$ boxes more in the top row. \qed

       Note that this gives a second proof of $|\motzET(n)|=\cat(n-1)$ in Proposition \ref{MotzProp} and a third proof of \eqref{eq:sv_syt_cat}.
 \begin{figure}[ht]
 \begin{center}
 \begin{tabular}{>{\bfseries}c|ccccccccc}
 i/n & \textbf{0} & \textbf{1} & \textbf{2} & \textbf{3} & \textbf{4} & \textbf{5} & \textbf{6} & \textbf{7} & 
 \textbf{8}\\
  \hline 
 8 & \;& \;& \;& \;& \;& \; & \; & \;  & $1,0$ \\
 7 & \;& \;& \;& \;& \;& \; & \; & $1,0$  &$7,0$\\   
 6 & \;& \;& \;& \;& \;& \; & $1,0$  &$6,0$     &$21,7$  \\   
 5 & \;& \;& \;& \;& \;& $1,0$   & $5,0$  & $15,6$  & $35,62$\\    
 4 & \;& \;& \;& \;& $1,0$  & $4,0$  & $10,5$ & $20,44$  & $35,253$\\    
 3 & \;& \;& \;& $1,0$   & $3,0$ & $6,4$ & $10,29$ & $15,144$ & $21,622$\\
 2 & \;& \;& $1,0$ & $2,0$ & $3,3$ & $4,17$ & $5,71$ & $6,270$ &$7,995$\\    
 1 & \;& $1,0$ & $1,0$ &$1,2$& $1,8$ & $1,27$ &$1,89$  & $1,296$ & $1,1000$\\    
 0 & $1,0$ & $0,0$ & $0,1$ & $0,2$ & $0,5$ & $0,14$ & $0,42$ & $0,132$ & $0,429$\\
 \hline
 $\sum$ & $1,0$ & $1,0$ & $2,1$ & $4,4$ & $8,16$ & $16,62$ & $32,236$ & $64,892$ & $128,3368$\\
 \end{tabular}
 \end{center}
 \caption{The values of  $e_{n,i}, f_{n,i}$ for $0\leq i \leq n \leq 8$. The size of $\bal^*(n,i)$ is $e_{n,i}+f_{n,i}$. The bottom row gives the column sums, that is the total number of ballotlike paths with $n$ steps without or with a $D$ step. Formulas for the sums, $n\ge 2$, are $2^{n-1}$ and $\binom{2n-2}{n-1}-2^{n-2}$ respectively.}
 \end{figure}

\section{Generating Functions}\label{Sec:GFun}

In this section, we use generating function techniques to derive enumerative results about the large-scale structure of paths in $\motz$, $\motzE$, $\motzT$, and $\motzET$. We follow the notation of \cite[Chapter I]{FlajoletSedgewick}. We also would like to thank Pierre Bonnet and Guillaume Chapuy for enlightening discussions during FPSAC 2024 in which they suggested several of the techniques used in this section.

Let 
$$E_{n}(U,D,u,d)\coloneqq \sum_{P \in \motz(n)}U^{\#\{\text{up steps of $P$}\}}D^{\#\{\text{down steps of $P$}\}}u^{\#\{\text{umber steps of $P$}\}}d^{\#\{\text{denim steps of $P$}\}},$$
and define the generating function
$$\textbf{E} \coloneqq \sum_{n\geq 0}E_n(U,D,u,d)t^n.$$

Similarly, we define $E_n^{\{1\}}$, $E_n^{\{2\}}$, and $E_n^{\{1,2\}}$ to be the generating polynomials of $\motzE(n)$, $\motzT(n)$, and $\motzET(n)$, respectively, and let $\textbf{E}^{\{1\}}$, $\textbf{E}^{\{2\}}$, and $\textbf{E}^{\{1,2\}}$ be their generating functions.

\begin{thm}
The generating functions $\textbf{E}$, $\textbf{E}^{\{1\}}$, $\textbf{E}^{\{2\}}$, and $\textbf{E}^{\{1,2\}}$ satisfy:
\begin{equation}
\textbf{E} = 1 + (u+d)t\textbf{E} + UDt^2\textbf{E}^2
\end{equation}
\begin{equation}
\textbf{E}^{\{1\}} = \frac{1}{1 - (UDt^2\textbf{E}+dt)}
\end{equation}
\begin{equation}
\textbf{E}^{\{2\}} = \frac{UDt^2\textbf{E}}{1-(UDt^2\textbf{E}+ut)}
\end{equation}
\begin{equation}
\textbf{E}^{\{1,2\}} = \frac{UDt^2}{\left(1-(ut + UDt^2\textbf{E})\right)\left(1 - (dt + UDt^2\textbf{E})\right)}
\end{equation}
\end{thm}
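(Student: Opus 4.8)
The plan is to apply the symbolic method of \cite[Chapter I]{FlajoletSedgewick}, turning a first-return decomposition of each path family directly into a functional equation for its generating function and then solving. The basic building block is an \emph{arch}: a factor $U\cdot P\cdot D$ in which the path leaves the $x$-axis, stays strictly above it along the interior $P$, and returns for the first time with the final $D$. Writing $P'$ for $P$ lowered by one unit, an arch carries weight $UDt^2$ times the weight of $P'$. The content of the proof is to determine, for each family, which (possibly restricted) class the interior $P'$ of the first arch and the tail after it belong to; the subsequent algebra is routine.

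For the unrestricted series I would use the classical Motzkin decomposition: a path of $\motz$ is empty, or is a horizontal step (umber or denim) followed by a path of $\motz$, or is an arch with unrestricted interior followed by a path of $\motz$. This gives $\textbf{E} = 1 + (u+d)t\textbf{E} + UDt^2\textbf{E}^2$. For $\motzE$ the only effect of restriction~(1) is at height $0$, where umber is banned; hence a ground-level horizontal must be denim, the interior of an arch lies above the axis and is therefore unrestricted, and the tail is again a $\motzE$ path. Peeling the first block yields $\textbf{E}^{\{1\}} = 1 + dt\,\textbf{E}^{\{1\}} + UDt^2\textbf{E}\,\textbf{E}^{\{1\}}$, which rearranges to the stated geometric form.

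The two series governed by restriction~(2) require organizing the decomposition around the \emph{first down step}. For $\motzT$: a nonempty path either opens with a ground-level horizontal, which---preceding every down step---must be umber and leaves a $\motzT$ tail, or opens with an up step initiating the first arch. Inside that arch the global first down step occurs, so the lowered interior $P'$ is itself a $\motzT$ path; but once the arch closes a down step has been used, restriction~(2) becomes vacuous, and the tail is an unrestricted $\motz$ path. Hence $\textbf{E}^{\{2\}} = 1 + ut\,\textbf{E}^{\{2\}} + UDt^2\textbf{E}^{\{2\}}\textbf{E}$. For $\motzET$ both colors are forbidden at height $0$ before the first down step, so a nonempty path must open with an arch; its lowered interior lies above the axis (restriction~(1) inactive) yet precedes the first down step (restriction~(2) active), so it is a $\motzT$ path, while the post-arch tail satisfies restriction~(1) but no longer~(2), so it is a $\motzE$ path. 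This gives $\textbf{E}^{\{1,2\}} = 1 + UDt^2\textbf{E}^{\{2\}}\textbf{E}^{\{1\}}$, and substituting the closed forms for $\textbf{E}^{\{2\}}$ and $\textbf{E}^{\{1\}}$ produces the claimed product. The leading constant records the empty path and is immaterial for the enumeration in \cref{MotzProp}, which is stated for $n\ge 2$.

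The main obstacle is precisely the non-local character of restriction~(2): unlike restriction~(1), which depends only on the current height, it cannot be verified block-by-block, so one must commit to a decomposition pinned to the first down step and then verify the two delicate bijective claims---that the lowered interior of the first arch sweeps out exactly the restricted family $\motzT$, and that the tail sweeps out exactly $\motz$ (for $\textbf{E}^{\{2\}}$) or $\motzE$ (for $\textbf{E}^{\{1,2\}}$). The degenerate cases deserve attention here, since the all-umber paths with no down step and the empty path are exactly what make the functional equations, and hence the boundary coefficients, correct.
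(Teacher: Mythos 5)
Your treatment of $\textbf{E}$ and $\textbf{E}^{\{1\}}$ coincides with the paper's. For the two families subject to restriction~(2) you take a genuinely different route: the paper anchors its decomposition at the \emph{first down step} and peels off the staircase $DQ_1D\cdots DQ_iDQ_{i+1}$ descending to the axis, summing a geometric series over the height $i$, whereas you use a first-return (arch) decomposition, observing that the lowered interior of the first arch inherits restriction~(2) (so lies in $\motzT$) while everything after the first return does not (so lies in $\motz$ or $\motzE$ as appropriate). Your bijective verifications, including the degenerate all-umber interiors, are sound, and your decomposition explains the product shape of $\textbf{E}^{\{1,2\}}$ structurally rather than as the output of a summation. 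The constant-term mismatch coming from the empty path you have already flagged, and it is harmless.

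What you must not gloss over is that your functional equation $\textbf{E}^{\{2\}}=1+ut\,\textbf{E}^{\{2\}}+UDt^2\textbf{E}^{\{2\}}\textbf{E}$ solves to $\frac{1}{1-(ut+UDt^2\textbf{E})}$, which is \emph{not} the expression $\frac{UDt^2\textbf{E}}{1-(ut+UDt^2\textbf{E})}$ asserted in the theorem; and your final substitution into $\textbf{E}^{\{1,2\}}=1+UDt^2\textbf{E}^{\{2\}}\textbf{E}^{\{1\}}$ tacitly uses \emph{your} version of $\textbf{E}^{\{2\}}$ --- with the stated one you would pick up an extra factor of $UDt^2\textbf{E}$ and would not recover the claimed product. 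You should state this discrepancy explicitly rather than writing as if you had reproduced the stated formula. For what it is worth, your version is the one consistent with the rest of the paper: at $U=D=u=d=1$ it expands as $1+t+2t^2+5t^3+\cdots$, matching $|\motzT(n)|=\cat(n)$ from \cref{MotzProp}, whereas the stated expression expands as $t^2+3t^3+9t^4+\cdots$ and already fails at $n=2$, where $\motzT(2)=\{UD,uu\}$ has two elements. The stated formula in effect assumes that every path in $\motzT$ begins with $U$ and contains a down step, which is true for $\motzET$ but false for $\motzT$; so the discrepancy you should have flagged points to a genuine error in the theorem as printed rather than in your argument.
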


\begin{proof}
We prove each of the functional equations by recursive decompositions of the corresponding paths. See Figure \ref{fig:genfunction} for schematic decompositions of the paths used in the proof.

\begin{itemize}
\item[$\textbf{E}$:] Any $P \in \motz(n)$ is either empty (when $n = 0$) or has one of the following forms (written as words):
\begin{itemize}
\item $uP'$ or $dP'$
\item $UP'DP''$,
\end{itemize}
where $P'$ and $P''$ are (generic) shorter bicolored Motzkin paths.
Summing over all $n$, this translates to
$$\textbf{E} = 1 + ut\textbf{E} + dt\textbf{E} + Ut\textbf{E}Dt\textbf{E},$$
which easily simplifies to our desired formula.

\item[$\textbf{E}^{\{1\}}$:] Any $P \in \motzE(n)$ can be written as a sequence of $d$-steps and subpaths of the form $UP'D$, where $P' \in \motz(m)$ for some $m< n$. Using the notation from \cite[Theorem I.1]{FlajoletSedgewick}, this means that
\[
\textbf{E}^{\{1\}} = \mathsf{SEQ}\left(Ut\textbf{E}Dt, dt\right)
= \frac{1}{1-\left(UDt^2\textbf{E} + dt\right)},
\]
as desired.

\item[$\textbf{E}^{\{1,2\}}$:] Any $P\in\motzET(n)$ begins with an initial $U$ step, a sequence of $U$ and $u$ steps, and a first down step $D$. Suppose that this first $D$ step ends at height $i \geq 0$. The remainder of $P$ is of the form $DQ_1DQ_2\cdots DQ_{i}DQ_{i+1}$, where $Q_1,\dots,Q_i$ belong to $\motz(m_1),\dots,\motz(m_i)$, respectively, and $Q_{i+1} \in \motzE(m_{i+1})$.

For any fixed $i$, we sum this decomposition over all $n$. The initial segment of $P$ can be written as:
\begin{itemize}
\item An initial $U$ step,
\item A sequence of $U$ and $u$ steps of indeterminate length, containing exactly $i$ up steps,
\item A $D$ step ending at height $i$.
\end{itemize}
Hence, this initial segment contributes a factor of $UDt^2[x^{i}]\frac{1}{1-(Utx+ut)}$ to the $i$th summand of $\textbf{E}^{\{1,2\}}$. By the general theory of geometric series, we know that $[x^{i}]\frac{1}{1-(Utx+ut)}$ is $\displaystyle\sum_{k\geq i}\binom{k}{i}(Ut)^i(ut)^{k-i}$. By reindexing, this becomes $(Ut)^i\displaystyle\sum_{k\geq 0}\binom{k+i}{k}(ut)^k = \frac{(Ut)^i}{(1-ut)^{i+1}}$. Thus, the initial segment of contributes a total factor of $\frac{UDt^2(Ut)^i}{(1-ut)^{i+1}}$ to the $i$th summand of $\textbf{E}^{\{1,2\}}$. On the other hand, when we sum over all $n$, the remainder of this generic path contributes a factor of $(Dt\textbf{E})^i\textbf{E}^{\{1\}}$ to the $i$th summand of $\textbf{E}^{\{1,2\}}$.

Putting all of this information together, we have
\begin{align*}\textbf{E}^{\{1,2\}} &= \sum_{i\geq 0}\frac{UDt^2(Ut)^i(Dt\textbf{E})^i\textbf{E}^{\{1\}}}{(1-ut)^{i+1}}\\
&= \frac{UDt^2\textbf{E}^{\{1\}}}{1-ut}\sum_{i\geq 0}\left(\frac{UDt^2\textbf{E}}{1-ut}\right)^i\\
&=\frac{UDt^2\textbf{E}^{\{1\}}}{1-ut} \cdot \frac{1}{1- \frac{UDt^2\textbf{E}}{1-ut}}\\
&= \frac{UDt^2\textbf{E}^{\{1\}}}{1-ut-UDt^2\textbf{E}}.\end{align*}

Substituting in $\textbf{E}^{\{1\}} = \frac{1}{1- (UDt^2\textbf{E}+dt)}$ yields the desired functional equation.

\item[$\textbf{E}^{\{2\}}$:] This case is similar to the case of $\textbf{E}^{\{1,2\}}$, except $Q_{i+1}$ is a path in $\motz(m_{i+1})$ instead of in $\motzE(m_{i+1})$.

\end{itemize}
\end{proof}

\begin{figure}[ht]\label{fig:genfunction}
\begin{tabular}{cc}
\begin{tikzpicture}[scale = 0.5, decoration = {snake, amplitude = 0.2mm}]
  \draw{ (0,0) -- (1,1)};
  \draw{(1,1)--(5,1)};
  \draw[decorate]{(1,1).. controls (2,3) and (3,3) .. (5,1)};
  \draw{(5,1)--(6,0)};
  \node at (2.75,1.75) {\textbf{E}};
  \draw{(6,0)--(10,0)};
  \draw[decorate]{(6,0).. controls (7,2) and (8,2) .. (10,0)};
  \node at (7.75,0.75) {\textbf{E}};
\end{tikzpicture}
&
\begin{tikzpicture}[scale = 0.5, decoration = {snake, amplitude = 0.2mm}]
\draw{(0,0) -- (1,1)};
\draw[decorate]{(1,1)--(3,4)};
\draw{(3,4)--(4,3)};
\draw{(4,3)--(6,3)};
\draw[decorate]{(4,3) .. controls (4.5,4) and (5,4) .. (6,3)};
\node at (5,3.4) {\textbf{E}};
\draw{(6,3)--(7,2)};
\draw{(7,2)--(9,2)};
\draw[decorate]{(7,2) .. controls (7.5,3) and (8,3) .. (9,2)};
\node at (8,2.4) {\textbf{E}};
\draw{(9,2)--(10,1)};
\draw{(10,1)--(12,1)};
\draw[decorate]{(10,1) .. controls (10.5,2) and (11,2) .. (12,1)};
\node at (11,1.4) {\textbf{E}};
\draw{(12,1)--(13,0)};
\draw{(13,0)--(15,0)};
\draw[decorate]{(13,0) .. controls (13.25,1.5) and (14.3,1.5) .. (15,0)};
\node at (14,0.4) {${\textbf{E}}^{\{1\}}$};
\end{tikzpicture}
\end{tabular}
\caption{Schematic decompositions of paths enumerated by $\textbf{E}$ (left) and by $\textbf{E}^{\{1,2\}}$ (right).}
\end{figure}
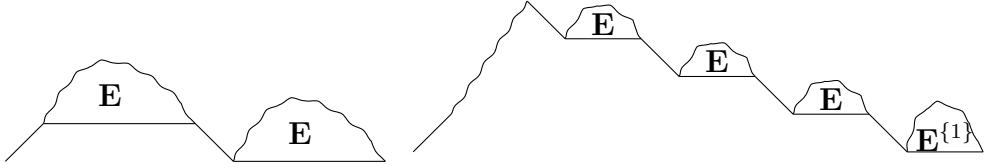

\begin{corollary}
We have $\textbf{E} = \frac{1 - (u+d)t - \sqrt{((u+d)t-1)^2 - 4UDt^2}}{2UDt^2}$.
\end{corollary}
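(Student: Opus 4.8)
The plan is to treat the functional equation for $\textbf{E}$ established in the preceding theorem as a quadratic equation in the single unknown $\textbf{E}$ (with coefficients in the ring of formal power series in the remaining variables) and to solve it explicitly via the quadratic formula.

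First I would rewrite $\textbf{E} = 1 + (u+d)t\textbf{E} + UDt^2\textbf{E}^2$ in standard form as
\[
UDt^2\,\textbf{E}^2 + \big((u+d)t - 1\big)\textbf{E} + 1 = 0.
\]
Applying the quadratic formula with $a = UDt^2$, $b = (u+d)t - 1$, and $c = 1$ then gives
\[
\textbf{E} = \frac{1 - (u+d)t \pm \sqrt{\big((u+d)t-1\big)^2 - 4UDt^2}}{2UDt^2}.
\]

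Next I would determine which of the two roots is correct. The key observation is that $\textbf{E}$ is a genuine formal power series in $t$ with constant term $1$, since the only bicolored Motzkin path of length $0$ is the empty path. Interpreting the square root as the formal power series with constant term $+1$, one inspects the behavior of each branch as $t \to 0$: with the $+$ sign the numerator tends to $2$ while the denominator tends to $0$, so that branch has a pole at the origin and cannot be a power series; with the $-$ sign the numerator vanishes, matching the order of vanishing of the denominator, and the quotient is a well-defined power series. This forces the minus sign and yields the claimed formula.

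The computation is entirely routine, so I expect no substantial obstacle; the only point requiring a moment's care is the branch selection, which is settled by the elementary fact that $\textbf{E}$ has constant term $1$. If further reassurance is wanted, one can expand the minus branch to low order in $t$ and confirm both that its constant term equals $1$ and (after the specialization $U = D = u = d = 1$) that the leading coefficients reproduce the values $|\motz(n)| = \cat(n+1)$ recorded in Proposition~\ref{MotzProp}.
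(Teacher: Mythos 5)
Your proposal is correct and follows essentially the same route as the paper: solve the quadratic functional equation for $\textbf{E}$ via the quadratic formula and select the branch whose Taylor expansion has constant term $1$. Your remark that the $+$ branch has a pole at $t=0$ is a slightly more explicit justification of the branch choice than the paper gives, but the argument is the same.
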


\begin{proof}
The quadratic formula tells us that $\textbf{E} = \frac{1 - (u+d)t \pm \sqrt{((u+d)t-1)^2 - 4UDt^2}}{2UDt^2}$, and by a Taylor expansion we can verify that only the desired solution has constant term $1$.
\end{proof}

Via some further algebraic manipulation, we thus have the following.

\begin{corollary}
We have
\begin{equation}
\textbf{E}^{\{1\}} = \frac{2}{1 + (u-d)t + \sqrt{((u+d)t-1)^2-4UDt^2}}
\end{equation}
\begin{equation}
\textbf{E}^{\{2\}} = \frac{1 - (u+d)t - \sqrt{((u+d)t-1)^2-4UDt^2}}{1 - (u-d)t + \sqrt{((u+d)t-1)^2-4UDt^2}}
\end{equation}
\begin{equation}\textbf{E}^{\{1,2\}} = \frac{2UDt^2}{1-(u+d)t + 2(ud-UD)t^2 + \sqrt{((u+d)t - 1)^2 - 4UDt^2}}.\end{equation}
\end{corollary}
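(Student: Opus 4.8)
The plan is to substitute the closed form for $\textbf{E}$ from the preceding corollary directly into the three functional equations established in the theorem, and then simplify. To keep the bookkeeping uniform I would abbreviate $S \coloneqq 1 - (u+d)t$ and $R \coloneqq \sqrt{((u+d)t-1)^2 - 4UDt^2} = \sqrt{S^2 - 4UDt^2}$, so that the formula for $\textbf{E}$ reads $\textbf{E} = (S-R)/(2UDt^2)$, equivalently $UDt^2\textbf{E} = (S-R)/2$. This one identity is the only place the radical enters; since every right-hand side in the theorem depends on $\textbf{E}$ only through the combination $UDt^2\textbf{E}$, expressing everything in terms of $(S-R)/2$ is what makes all three cases mechanical.

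For $\textbf{E}^{\{1\}}$ and $\textbf{E}^{\{2\}}$ the work is a one-line simplification each. Substituting $UDt^2\textbf{E} = (S-R)/2$ into $\textbf{E}^{\{1\}} = 1/(1 - UDt^2\textbf{E} - dt)$, the denominator collapses to $(1 + (u-d)t + R)/2$, giving $\textbf{E}^{\{1\}} = 2/(1 + (u-d)t + R)$. The same substitution into $\textbf{E}^{\{2\}} = UDt^2\textbf{E}/(1 - UDt^2\textbf{E} - ut)$ turns the denominator into $(1 - (u-d)t + R)/2$ and the numerator into $(S-R)/2$, yielding the claimed ratio. In both cases the only thing to notice is the recombination $2 - S = 1 + (u+d)t$ of the constant terms, after which the $u$ and $d$ contributions assemble into $\pm(u-d)t$.

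For $\textbf{E}^{\{1,2\}}$ I would reuse the two denominators just computed: the factors $1 - ut - UDt^2\textbf{E}$ and $1 - dt - UDt^2\textbf{E}$ simplify to $(1 - (u-d)t + R)/2$ and $(1 + (u-d)t + R)/2$, respectively. Their product is a difference of squares $\bigl((1+R)^2 - (u-d)^2t^2\bigr)/4$, and this is the one genuinely non-mechanical step: here I would invoke $R^2 = S^2 - 4UDt^2$ to remove the square root from inside the product. Expanding $S^2 = (1-(u+d)t)^2$ and using $(u+d)^2 - (u-d)^2 = 4ud$, the product simplifies to $\bigl(1 - (u+d)t + 2(ud-UD)t^2 + R\bigr)/2$; substituting back into $\textbf{E}^{\{1,2\}} = UDt^2/(\text{product})$ gives the stated expression.

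The calculation is entirely routine once the substitution $UDt^2\textbf{E} = (S-R)/2$ is in place, so I expect no real obstacle. The only point where I would take care is the difference-of-squares step for $\textbf{E}^{\{1,2\}}$: the identity $R^2 = S^2 - 4UDt^2$ must be applied so that the radical survives in the final denominator only as the single linear term $R$, rather than reappearing squared; getting the $2(ud-UD)t^2$ coefficient right depends on tracking the $(u+d)^2 t^2 - (u-d)^2 t^2 = 4ud\,t^2$ cancellation against the $-4UDt^2$ coming from $R^2$.
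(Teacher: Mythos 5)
Your proposal is correct and is exactly the computation the paper has in mind: the paper simply asserts the corollary follows ``via some further algebraic manipulation'' of the functional equations and the closed form for $\textbf{E}$, and your substitution $UDt^2\textbf{E} = (S-R)/2$ followed by the difference-of-squares step (using $R^2 = S^2 - 4UDt^2$ and $(u+d)^2 - (u-d)^2 = 4ud$) carries out that manipulation correctly for all three formulas.
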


\begin{example}
The first few terms of the Taylor series for $\textbf{E}^{\{1,2\}}$ are:
$$UDt^2 + U(u+d)Dt^3 +U\left(d^2+ud+2UD+u^2\right)Dt^4+\cdots.$$

These correspond to
\begin{align*}\motzET(2) &= \{UD\}\\ 
\motzET(3) &= \{UuD, UDd\}\\
\motzET(4) &= \{UDdd, UuDd, UDUD, UUDD, UuuD\}.
\end{align*}
\end{example}

These sorts of generating function techniques provide surprising utility. For example, consider the generating function $\textbf{F}(U,t)\coloneqq \left.\textbf{E}^{\{1,2\}}\right|_{u=d=D=1}$. The coefficient of $t^n$ in $\textbf{F}$ is a polynomial in $U$ wherein a term of the form $aU^k$ means that there are exactly $a$ paths in $\motzET(n)$ with $k$ up-steps. This means that
$$\left.\frac{\partial \textbf{F}}{\partial U}\right|_{U=1} = \sum_{n\geq 1} \left(\sum_{k\geq 0}k\cdot\#\{P \in \motzET(n) \st P \text{ has $k$ up-steps}\}\right)t^n,$$
so $$\frac{[t^n]\left.\frac{\partial \textbf{F}}{\partial U}\right|_{U=1}}{[t^n]\textbf{F}(1,t)} = \frac{\sum_{k\geq 0}k\cdot\#\{P \in \motzET(n) \st P \text{ has $k$ up-steps}\}}{\#\motzET(n)} = \mathbb{E}_{\text{unif}}\left(\#\{\text{up-steps of $P$}\}\right).$$

One can check that
$$\left.\frac{\partial \textbf{F}}{\partial U}\right|_{U=1} = \frac{1 - \sqrt{1 - 4 t} - 2t(4 - 3\sqrt{1 - 4 t} + t(-9 + 3\sqrt{1 - 4t} + 4t))}{2-8t}.$$

By computing the Taylor expansion\footnote{In this case, using \emph{Mathematica}.} one finds that 
$$\left.\frac{\partial \textbf{F}}{\partial U}\right|_{U=1} = t^2 + \sum_{n\geq3}\frac{(n^2+n-6)(2n-4)!}{n!(n-2)!}t^n.$$

Meanwhile, we know that $\#\motzET(n) = \cat(n-1)$ for all $n\geq 2$, so we immediately have that 
$$\mathbb{E}_{\text{unif}}\left(\#\{\text{up-steps of $P$}\}\right) = \begin{cases}1, & n=2\\ \frac{n^2+n-6}{4n-6}, & n\geq 3\end{cases}.$$

Now, notice that the number of up-steps in a path $P \in \motzET(n)$ is equal to the number of columns in the corresponding two-row set-valued SYT, since under our bijection each up step corresponds to the smallest entry in a cell in the top row of a set-valued SYT.

The preceding discussion provides a proof of the following theorem, which resolves in the affirmative a conjecture in the extended abstract \cite{PlusKFPSAC}.

\begin{thm}
For all $n\geq 3$, if we sample uniformly at random, the expected number of columns of a $T \in \displaystyle\bigsqcup_{2b+k = n}\SYT^{+k}(2\times b)$ is $\frac{n^2+n-6}{4n-6}$.
\end{thm}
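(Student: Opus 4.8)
The plan is to translate the statement about set-valued tableaux into a statement about lattice paths, and then read the expectation off the generating function $\textbf{E}^{\{1,2\}}$ computed above. First I would invoke the bijection $\bijTwo$ of Theorem~\ref{thm:motzkin_bij}, which identifies $\bigsqcup_{2b+k=n}\SYT^{+k}(\rect{2}{b})$ with $\motzET(n)$. Under this bijection the up-steps of a path are exactly the minimal entries of the boxes in the first row of the corresponding tableau, so a tableau with $b$ columns maps to a path with exactly $b$ up-steps. Hence the number of columns of $T$ equals the number of up-steps of $\bijTwo(T)$, and since $\bijTwo$ is a bijection it carries the uniform distribution to the uniform distribution. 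The expected number of columns therefore equals the expected number of up-steps of a uniformly random path in $\motzET(n)$, and it suffices to compute the latter.

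Next I would make the variable $U$ track up-steps by setting the remaining variables to $1$, i.e. work with $\textbf{F}(U,t)=\left.\textbf{E}^{\{1,2\}}\right|_{u=d=D=1}$. As explained in the discussion above, the expected number of up-steps in a uniformly random path of $\motzET(n)$ is the ratio
$$\frac{[t^n]\left.\partial_U\textbf{F}\right|_{U=1}}{[t^n]\textbf{F}(1,t)}.$$
The denominator is immediate: $[t^n]\textbf{F}(1,t)=\#\motzET(n)=\cat(n-1)$ for $n\ge 2$ by Proposition~\ref{MotzProp}. For the numerator I would start from the closed form $\textbf{E}^{\{1,2\}}=\frac{2UDt^2}{1-(u+d)t+2(ud-UD)t^2+\sqrt{((u+d)t-1)^2-4UDt^2}}$, specialize to $u=d=D=1$, differentiate with respect to $U$, and set $U=1$; this produces the explicit algebraic function in $t$ and $\sqrt{1-4t}$ displayed above, whose Taylor coefficients I then extract.

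The main obstacle is precisely this coefficient extraction: after differentiating one obtains a rational function of $t$ and $\sqrt{1-4t}$, and one must verify that its $[t^n]$ coefficient equals $\frac{(n^2+n-6)(2n-4)!}{n!(n-2)!}$. I would handle this either by recognizing $\sqrt{1-4t}$ as (a shift of) the Catalan generating function and using the standard identity $[t^n]\sqrt{1-4t}=-\frac{1}{2n-1}\binom{2n}{n}$ together with Cauchy convolution, or simply by guessing the closed form and checking that it satisfies the same algebraic/recursive relation (the same strategy used to verify the lemma preceding Theorem~\ref{BallotThm}). Once the numerator coefficient is in hand, I would divide by $\cat(n-1)=\frac{1}{n}\binom{2n-2}{n-1}$; the resulting ratio of factorials telescopes, since $\frac{(2n-4)!}{(2n-2)!}=\frac{1}{(2n-2)(2n-3)}$ and the factorial factors in $n$ collapse to $(n-1)$, leaving $\frac{(n^2+n-6)(n-1)}{(2n-2)(2n-3)}=\frac{n^2+n-6}{4n-6}$ for $n\ge 3$. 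This completes the proof, the restriction $n\ge 3$ matching the separate value at $n=2$ recorded above.
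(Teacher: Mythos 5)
Your proposal is correct and follows essentially the same route as the paper: translate columns to up-steps via the bijection $\bijTwo$, specialize the closed form of $\textbf{E}^{\{1,2\}}$ to $\textbf{F}(U,t)$, extract $[t^n]\left.\partial_U\textbf{F}\right|_{U=1}=\frac{(n^2+n-6)(2n-4)!}{n!(n-2)!}$, and divide by $\cat(n-1)$. The only difference is that you sketch an explicit coefficient-extraction argument where the paper simply computes the Taylor expansion by computer algebra; your final simplification to $\frac{n^2+n-6}{4n-6}$ checks out.
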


In fact, since $\textbf{E}^{\{1,2\}}$ is symmetric under interchanging $U$ with $D$ and under interchanging $u$ with $d$, we can compute the expected numbers of $D$, $u$, and $d$-steps as well. For $n\geq 3$, the expected values of each step type are shown in the following table.

\begin{figure}[h]
\begin{center}
    \begin{tabular}{c||c|c}
    Step & $U/D$ & $u/d$\\
    \hline
    Expected number & $\frac{n^2+n-6}{4n-6}$ & $\frac{n^2-4n+6}{4n-6}$
    \end{tabular}
\end{center}
\end{figure}

\section{Set-Valued Linear Extensions}\label{Poset-Section}
Given the well-behaved numerology for the two-row set-valued SYT, it seems natural to try and find $q$-analogues of our results with respect to some statistic. In the following sections we begin that work using the $\comaj^{+k}$ statistic mentioned in the introduction. 

Many of the technical results about $\comaj^{+k}$ can be stated and proved in the language of linear extensions of partially-ordered sets. Throughout this section, $P$ will denote a finite partially-ordered set, $\mathcal{J}(P)$ will be its poset of order ideals, and $\mathcal{L}(P)$ its set of linear extensions.

A poset $P$ is said to be \emph{naturally-labeled} if it has a \emph{labeling} $\omega: P \to [|P|]$ such that if $p_1 \preceq_P p_2$, then $\omega(p_1) \leq \omega(p_2)$.

In \cite{hopkins2021qenumeration} a \emph{set-valued linear extension} of $P$ is defined as a map $S: P \to 2^{[n+k]}$ such that
\begin{itemize}
    \item $S(p) \neq \emptyset$ for any $p \in P$.
    \item $\bigcup S(p) = [n+k]$.
    \item $S(p) \cap S(q) = \emptyset$ whenever $p\neq q$.
    \item $\max S(p) < \min S(q)$ whenever $p<q$.
\end{itemize}

We write $\Lin^{+k}(P)$ for the set of set-valued linear extensions of $P$ taking values in $[n+k]$.

\begin{proposition}\label{Set_Valued_Lin_Bijection}
For any finite (naturally-labeled) poset $P$ with $n$ elements. There is a bijection between $\Lin^{+k}(P)$ and the set of triples $(T, \overline{t}, \overline{p})$, where $T \in \Lin(P)$, $\overline{t} = (t_1,\dots, t_{k})$ such that $0 < t_1 \leq \cdots \leq t_{k} \leq n$, and $\overline{p} \in P^{k}$ such that $p_i \in \max T^{-1}(\{1,\dots, t_i\})$ for all $i$.
\end{proposition}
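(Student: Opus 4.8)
The plan is to exhibit mutually inverse maps, generalizing the construction illustrated in \cref{SVSYTEx} from Ferrers diagrams to arbitrary naturally-labeled posets. Given $S \in \Lin^{+k}(P)$, the $n$ cells partition $[n+k]$ into nonempty sets, so exactly $k$ of the values are \emph{extra}, meaning they are not the minimum of their cell; I list these as $e_1 < \cdots < e_k$. For the linear extension $T$, I note that $p \mapsto \min S(p)$ is an injection $P \to [n+k]$ that is strictly order-preserving (since $p <_P q$ forces $\max S(p) < \min S(q)$), so standardizing its values to $[n]$ yields a genuine $T \in \Lin(P)$. For each $j$ I set $t_j$ to be the number of cells whose minimum is at most $e_j$, and I let $p_j$ be the element of $P$ whose cell contains $e_j$.

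The first thing to check is that this lands in the claimed target. I would show that for each threshold $v$ the set $A_v \coloneqq \{p : \min S(p) \le v\}$ is an order ideal of $P$: if $p \in A_v$ and $q <_P p$ then $\min S(q) \le \max S(q) < \min S(p) \le v$. Since standardization sends the $t$ smallest cell-minima to the labels $1,\dots,t$, we get $T^{-1}(\{1,\dots,t_j\}) = A_{e_j}$. A short argument then shows $p_j$ is \emph{maximal} in this ideal: if $p_j <_P q$ with $q \in A_{e_j}$, then $e_j \le \max S(p_j) < \min S(q) \le e_j$, a contradiction. Monotonicity of $v \mapsto A_v$ gives $t_1 \le \cdots \le t_k$, while $t_j \ge 1$ (the cell of $e_j$ has a strictly smaller minimum) and $t_j \le n$ are immediate, so the constraints $0 < t_1 \le \cdots \le t_k \le n$ and $p_j \in \max T^{-1}(\{1,\dots,t_j\})$ all hold.

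For the inverse, given $(T, \overline t, \overline p)$ I would reconstruct $S$ by interleaving $n$ ``minimum slots'' with the $k$ extra values: scanning the slots $1,\dots,n$ in order, immediately after the $t$-th slot I insert each index $j$ with $t_j = t$ (in increasing order of $j$); this produces a length-$(n+k)$ word which I identify with $[n+k]$, assigning the $t$-th minimum slot to cell $T^{-1}(t)$ and the value inserted for index $j$ to cell $p_j$. The main obstacle is verifying that this $S$ satisfies the order condition $\max S(p) < \min S(q)$ whenever $p <_P q$, and this is exactly where maximality of the $p_j$ is used: if a value in cell $p$ comes from an index $j$ with $p_j = p$, then since $p$ is maximal in $T^{-1}(\{1,\dots,t_j\})$ and $p <_P q$, we must have $T(q) > t_j$, so the minimum slot of $q$ is scanned strictly after that inserted value; combined with $T(p) < T(q)$ this places all of $S(p)$ before $\min S(q)$. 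The remaining axioms (nonemptiness, disjointness, and exhausting $[n+k]$) are immediate from the construction.

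Finally I would confirm that the two maps are mutually inverse, which is routine: the forward map recovers the extra values as exactly the inserted indices and recovers each $t_j$ as the number of minimum slots preceding the $j$-th insertion, while the backward map re-threads the scan in the unique order dictated by $e_1 < \cdots < e_k$ together with $t_1 \le \cdots \le t_k$. Specializing $P$ to the Ferrers poset of $\lambda$ then recovers \cref{SVSYTTriple}, with the thresholds $t_j$ encoding the chain $\lambda_j = T^{-1}(\{1,\dots,t_j\})$ and the $p_j$ encoding the chosen corner cells $u_j$.
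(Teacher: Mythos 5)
Your proof is correct, and it constructs the same bijection as the paper: in both, the $j$th extra element is the value $t_j+j$ placed in the cell $p_j$, and the underlying linear extension $T$ is the standardization of the cell minima. The difference is in how the verification is organized. The paper inserts the extra elements one at a time, passing through intermediate extensions $T_i \in \Lin^{+i}(P)$ and checking the order condition $\max T_i(p) < \min T_i(p')$ by induction on $i$; you instead describe the final object in one shot as an interleaving of the $n$ minimum slots with the $k$ inserted values and verify the order axiom globally, with the maximality of $p_j$ in $T^{-1}(\{1,\dots,t_j\})$ doing exactly the same work in both arguments. Your version has the advantage of making the positions $t_j+j$ of the extra values explicit and of avoiding the induction; the paper's version has the advantage of exhibiting $\Lin^{+k}(P)$ as built recursively from $\Lin^{+(k-1)}(P)$. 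One small point you pass over quickly: to conclude that the two maps are mutually inverse you need that in the reconstructed $S$ each minimum slot really is the minimum of its cell (equivalently, that the non-minimum entries of $S$ are exactly the inserted values). This does follow from your position count, since the slot of $p_j$ sits at position $T(p_j)+\#\{j' \st t_{j'}<T(p_j)\} \le t_j + j - 1 < t_j+j$ because $T(p_j)\le t_j$ and every $j'$ with $t_{j'}<T(p_j)$ satisfies $j'<j$, but it is worth stating explicitly.
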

\begin{proof}
Given any such triple $(T,\overline{t},\overline{p})$, we construct a set-valued linear extension in $\Lin^{+k}(P)$. For each $i=1, \dots, k$, we define $T_i \in \Lin^{+i}(P)$ in terms of $T_{i-1}$ (taking $T_0 \coloneqq T$) in the following way:

$$T_i(s) = \begin{cases} T_{i-1}(s) \cup \{t_i + i\}, & s = p_i,\\ T_{i-1}(s), & s \in T^{-1}(\{1,\dots, t_i\})\setminus \{p_i\}\\ \{x +1 \st x \in T_{i-1}(s)\}, & s \in T^{-1}(\{t_i+1,\dots, n\})\end{cases}.$$

In other words, the set-valued filling $T_i$ is obtained from the set-valued filling $T_{i-1}$ by adjoining $t_i+i$ to $T_{i-1}(p_i)$ and then increasing each entry of $T_{i-1}(s)$ by $1$ for each $s \in T^{-1}(\{t_i+1,\dots,n\})$. 

At each step, we see that $T_i$ is a set-valued linear extension by induction. Indeed, by assumption $T_0$ is a linear extension. For each $i = 1,\dots, k$, the sets $T_{i-1}(p)$ partition the set $[n+i-1]$, so the sets $T_i(p)$ form a partition of $[n+i]$. 

It thus remains to check that if $p < p'$ then $\max(T_i(p)) < \min(T_i(p'))$. By assumption this holds for $T_{i-1}$. If $T(p) \leq t_i$ and $p \neq p_i$, then $\max(T_{i}(p)) \leq t_i +i$ and if $T(p') > t_i$ then $\min(T_i(p)) > t_i +i$. If $T(p)$ and $T(p')$ are both at most $t_i$, then $p \neq p_i$ so $\max(T_i(p)) = \max(T_{i-1}(p))$ and $\min(T_i(p')) = \min(T_{i-1}(p'))$. Finally, if $T(p)$ and $T(p')$ are both greater than $t_i$, then $\max(T_i(p)) = \max(T_{i-1}(p)) + 1$ and $\min(T_i(p')) = \min(T_{i-1}(p')) + 1$. This completes the induction and shows that $T_i$ belongs to $\Lin^{+i}(P)$ for all $i$.

Conversely, given a $S \in \Lin^{+k}(P)$ we can obtain a triple $(T,\overline{t},\overline{p})$. Indeed, given such a $S$, we construct a sequence $S_{k}, S_{k-1},\dots,S_0$ with $S_{k} \coloneqq S$ and $S_i \in \Lin^{+i}(P)$ for all $i$ by reversing the above procedure. More precisely, let $p_i$ be the $\omega$-largest element of $P$ such that $\#S_i(P)>1$, and let $t_i \coloneqq \max(S_i(p_i)) - i$. We then define $S_{i-1}$ in terms of $S_i$ as:
$$S_{i-1}(s) = \begin{cases} S_{i}(s)\setminus \{\max(S_i(s))\}, & s = p_i,\\ S_{i}(s), & S_i(s) \subseteq \{1,\dots, t_i +i\}, \; s\neq p_i\\ \{x -1 \st x \in S_{i}(s)\}, & S_i(s) \subseteq \{t_i+ i +1,\dots, n+i\}\end{cases}.$$

If we let $T \coloneqq S_0$, $\overline{t}\coloneqq (t_1,\dots,t_{k})$ and $\overline{p} \coloneqq (p_1,\dots,p_{k})$, then $(T,\overline{t},\overline{p})$ is our desired triple. Indeed, by construction we see that $T$ is an (ordinary) linear extension of $P$, that $0\leq t_1 \leq \cdots \leq t_{k} \leq n$, and that $p_i \in \max T^{-1}(\{1,\dots, t_i\})$ for all $i$. Moreover, it is clear that these two constructions are mutually inverse.
\end{proof}

\begin{example}
Let $P(\lambda)$ be the set of cells of the Ferrers diagram for $\lambda$, with $c_1 \preceq_P c_2$ if $c_1$ is weakly north and/or weakly west of $c_2$. Ordering the cells of $\lambda$ left to right from the top to the bottom yields a natural labeling $\omega$ (see Figure~\ref{fig:NatLabel}).

\begin{figure}
\begin{ytableau}
1 & 2 & 3 & 4 \\
5 & 6 & 7 & 8 \\
9 & 10 & 11 & 12
\end{ytableau}
\caption{The natural labeling $\omega$ on $P(3\times 4)$.}\label{fig:NatLabel}
\end{figure}

The set-valued Young tableau from \ref{SVSYTEx} is a set-valued linear extension of $P(3\times 4)$.

\end{example}

Motivated by the techniques in \cite{hopkins2021qenumeration}, we wish to define a probability distribution on the set of multichains of order ideals of $P$. Informally, we will assign a weight to a multichain $\mathcal{I}$ by $q$-counting the set-valued linear extensions ``compatible'' with $\mathcal{I}$ with respect to $\comaj^{+k}$.

More formally, let $S \in \Lin^{+k}(P)$, and let $d_1,\dots,d_{k}$ be the set of all non-minimum entries of $S(p)$ for all $p \in P$, written in increasing order. For $i \in [k+1]$ define $S_i$ to be the restriction of $S$ to $S^{-1}(\{d_{i-1},\dots,d_i-1\})$, where $d_0 = 0$ and $d_{k+1} = n+k +1$. We define a \emph{descent} of $S_i$ to be a $j$ such that $j$ and $j+1$ are in the range of $S_i$ and $S^{-1}(j+1) < S^{-1}(j)$ and write $\Des(S_i)$ for the descent set of $S_i$. We then define
$$\Des^{+k}(S) \coloneqq \bigcup_{i=1}^{k+1}\Des(S_i) \cup \{d_1,\dots,d_{k}\},$$
and
\begin{align*}\comaj^{+k}(S) &\coloneqq \sum_{j \in \Des^{+k}(S)}n+k - j\\
&= \sum_{i=1}^{k+1}\sum_{j \in \Des(S_i)}(n+k - j) + \sum_{i=1}^{k}(n+k - d_{i}).\end{align*}

Now, let $S \in \Lin^{+k}(P)$ correspond to the triple $(T,\overline{t},\overline{p})$. We say that $S$ is \emph{compatible with $\mathcal{I}$} if $T^{-1}(\{1,\dots,t_j\}) = I_j$ for all $j$, and define 
$$\vartheta(T,\overline{t}) \coloneqq q^{\comaj^{+k}(S)}.$$

We then define 
$$ \mu_{\Lin}^q(\mathcal{I}) = \frac{1}{\mathcal{Z}_{\Lin}(q)}\sum_{\substack{T \in \Lin(P)\\ T^{-1}(\{1,\dots,t_j\}) = I_j}}\vartheta(T,\overline{t}),$$
where $\mathcal{Z}_{\Lin}(q)$ is a normalizing constant which we will compute in this section.

The reader will notice that the definition of $\vartheta$ does not use all of the data from the set-valued linear extension $S$, as it does not depend on $\overline{p}$. It is therefore reasonable to ask how many different $S \in \Lin^{+k}(P)$ are compatible with a given $\mathcal{I}$.

Given an order ideal $I$ of $P$, we define the \emph{down-degree} of $I$ to be
$$ \ddeg(I) \coloneqq \#\{\text{maximal elements of $I$}\}.$$
The term ``down-degree'' arises from the fact that $I$ covers exactly $\ddeg(I)$ many elements in the lattice of order ideals $\mathcal{J}(P)$.

\begin{remark}
We observe that for any fixed $T$ and $\bar{t}$, there are exactly $\displaystyle\prod_{j=1}^{k}\ddeg(I_j)$ triples $(T,\bar{t},\bar{p})$ that yield the same multichain of order ideals
$$\mathcal{I} = \emptyset \subseteq I_1 \subseteq \cdots \subseteq I_{k} \subseteq P$$
with $I_j \coloneqq T^{-1}(\{1,\dots,k_j\})$.
\end{remark}

Before proceeding, we need the following technical lemma.

\begin{lemma}[{{\cite[Lemma 2.7]{hopkins2021qenumeration}}}]\label{lem:PermLemma}
For any $X\subseteq \{0,\dots n\}$, let $$\pi(X,t)=\#\{j \in X \st  j < t\}+\begin{cases}n-t, & t \notin X\\ 0, & t\in X\end{cases}.$$ Then, the sequence $\{\pi(X,0),\dots,\pi(X,n)\}$, is a permutation of $\{0,\dots, n\}$.
\end{lemma}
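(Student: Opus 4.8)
The plan is to show that $X \mapsto \pi(X,\cdot)$ produces a permutation of $\{0,1,\dots,n\}$ by verifying that the map $t \mapsto \pi(X,t)$ is injective on $\{0,1,\dots,n\}$; since both the domain and the proposed codomain have $n+1$ elements, injectivity is equivalent to being a bijection. The key structural observation is that $\pi(X,t)$ is built to be a ``weighted rank'' of $t$: the term $\#\{j \in X : j < t\}$ counts how many elements of $X$ lie strictly below $t$, while the added term $n - t$ (when $t \notin X$) records the position of $t$ among the non-elements. My strategy is to track how $\pi(X,t)$ changes as $t$ increases by one and argue that it never repeats a value.

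First I would partition $\{0,\dots,n\}$ according to membership in $X$, writing $X = \{x_1 < x_2 < \cdots < x_m\}$ and its complement $\overline{X} = \{0,\dots,n\}\setminus X$. For $t \in X$, we have $\pi(X,t) = \#\{j \in X : j < t\}$, which is just the rank of $t$ within $X$; as $t$ ranges over the elements of $X$ in increasing order, these values are exactly $0, 1, \dots, m-1$, each attained once. For $t \notin X$, we have $\pi(X,t) = \#\{j \in X : j < t\} + (n - t)$. The cleanest route is then to verify that as $t$ ranges over $\overline{X}$, the values $\pi(X,t)$ are exactly $\{m, m+1, \dots, n\}$, each attained once, so that the two ranges $\{0,\dots,m-1\}$ and $\{m,\dots,n\}$ are disjoint and together exhaust $\{0,\dots,n\}$.

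To handle the $t \notin X$ case, I would examine the difference $\pi(X,t') - \pi(X,t)$ for consecutive non-elements $t < t'$ of $\overline{X}$, or more simply show that $\pi(X,\cdot)$ is strictly decreasing on $\overline{X}$ when read in increasing order of $t$: increasing $t$ by one step (staying within $\overline{X}$, possibly skipping over elements of $X$) increases $\#\{j \in X : j < t\}$ by the number of skipped $X$-elements while decreasing $n-t$ by one more than that number, so the net change in $\pi$ is exactly $-1$ at each step down the list of complement elements. Tracking the endpoints, the largest element of $\overline{X}$ maps to the smallest value $m$ and the smallest element of $\overline{X}$ maps to $n$, confirming the range is precisely $\{m,\dots,n\}$ hit bijectively. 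Since the $X$-values fill $\{0,\dots,m-1\}$ and the $\overline{X}$-values fill $\{m,\dots,n\}$ with no overlap, the combined map is a bijection and hence a permutation.

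The main obstacle is purely bookkeeping: carefully handling the boundary behavior and the ``step over a block of $X$-elements'' transitions so that the claim of a uniform decrement by $1$ across $\overline{X}$ is airtight, particularly at $t = n$ and in the edge cases where $X$ is empty or all of $\{0,\dots,n\}$. I expect no conceptual difficulty beyond organizing the two cases cleanly; the heart of the argument is simply recognizing that $\pi(X,t)$ interpolates between two disjoint ranks, one increasing over $X$ and one decreasing over $\overline{X}$, so that monotonicity on each piece combined with the disjointness of the target intervals forces bijectivity.
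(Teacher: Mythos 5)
Your argument is correct. Note that the paper itself gives no proof of this lemma---it is quoted verbatim from \cite[Lemma~2.7]{hopkins2021qenumeration}---so there is no in-paper argument to compare against; your write-up stands on its own. The decomposition into $X$ and its complement is the right one: on $X$ the value $\pi(X,t)$ is the zero-indexed rank of $t$ in $X$, sweeping out $\{0,\dots,m-1\}$, and your telescoping claim that $\pi$ drops by exactly $1$ between consecutive elements of $\overline{X}$ checks out (between consecutive non-elements $y_i<y_{i+1}$ the rank term gains $y_{i+1}-y_i-1$ while $n-t$ loses $y_{i+1}-y_i$), with the endpoints $\pi(X,y_1)=n$ and $\pi(X,y_{\max})=m$ as you state. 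If you want to shortcut the bookkeeping you flag as the main obstacle, observe that for $t\notin X$ one has $\#\{j\in X : j<t\}=t-\#\{j\in\overline{X} : j<t\}$, hence $\pi(X,t)=n-\#\{j\in\overline{X} : j<t\}$, which is visibly the value $n$ minus the rank of $t$ in $\overline{X}$; this gives the range $\{m,\dots,n\}$ bijectively in one line and handles the edge cases $X=\emptyset$ and $X=\{0,\dots,n\}$ automatically.
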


\begin{remark}\label{eq:q-Lin_dist}
We have
\begin{align*}&\vartheta(T,\{t_1,\dots,t_{k}\})\\ &= q^{\binom{k}{2}} \prod_{j \in \Des(T)}q^{n-j}\left(\prod_{i=1}^{k}q^{\begin{cases}n-t_i, & t_i \notin \Des(T)\\ 0, & t_i \in \Des(T)\end{cases}}\right)q^{\displaystyle\sum_{i=1}^{k+1}(k+1-i)\cdot\#\{j \in \Des(T) \st t_{i-1} < j < t_i\}}\\
&=q^{\comaj(T)+\binom{k}{2}}\cdot \left(\prod_{i=1}^{k}q^{\begin{cases}n-t_i, & t_i \notin \Des(T)\\ 0, & t_i \in \Des(T)\end{cases}}\right) \cdot q^{\displaystyle\sum_{i=1}^{k+1}(k+1-i)\cdot\#\{j \in \Des(T) \st t_{i-1} < j < t_i\}}\end{align*}
\end{remark}

\begin{proposition}
\[\sum_{T\in \Lin(P)}\sum_{0\le t_1\le\dots\le t_k} \vartheta(T,\{t_1,\dots,t_{k}\})=q^{\binom{k}{2}}\cdot\qbinom{k+n}{k}_q\cdot \sum_{T\in \Lin(P)} q^{\comaj(T)}
\]
\end{proposition}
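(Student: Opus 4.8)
The plan is to factor the double sum and reduce everything to a single clean $q$-binomial evaluation. First I would fix $T \in \Lin(P)$ and isolate the inner sum $\sum_{0 \le t_1 \le \cdots \le t_k \le n}\vartheta(T,\{t_1,\dots,t_k\})$. Using the closed form for $\vartheta$ recorded in \cref{eq:q-Lin_dist}, the factor $q^{\comaj(T)+\binom{k}{2}}$ does not depend on $\overline{t}$ and pulls out of the inner sum, so the entire identity will follow once I show that
$$\sum_{0\le t_1\le\cdots\le t_k\le n} q^{\,\sum_{i=1}^k \pi(\Des(T),\,t_i)} \;=\; \qbinom{n+k}{k}_q,$$
where $\pi$ is the function from \cref{lem:PermLemma}. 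The first step is therefore to rewrite the two $\overline{t}$-dependent factors of $\vartheta$ as the single product $\prod_{i=1}^k q^{\pi(\Des(T),t_i)}$: the first factor already supplies the term $(n-t_i)$, present exactly when $t_i\notin\Des(T)$, which is one summand of $\pi(\Des(T),t_i)$, while regrouping the weighted descent counts $(k+1-i)\#\{j\in\Des(T):t_{i-1}<j<t_i\}$ over all $i$ is designed to recover $\sum_i \#\{j\in\Des(T):j<t_i\}$, the remaining summand of $\sum_i\pi(\Des(T),t_i)$.

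The conceptual heart of the argument is the evaluation of the displayed sum, which I would prove to be \emph{independent of} $\Des(T)$. Write $X = \Des(T)$. \cref{lem:PermLemma} tells us that $t\mapsto \pi(X,t)$ is a permutation of $\{0,1,\dots,n\}$. Rather than treat $(t_1,\dots,t_k)$ as an ordered tuple, I would view a weakly increasing sequence $0\le t_1\le\cdots\le t_k\le n$ as a size-$k$ multiset $M$ drawn from $\{0,\dots,n\}$. Applying $\pi(X,\cdot)$ elementwise sends $M$ to another size-$k$ multiset $\pi(X,M)$, and because $\pi(X,\cdot)$ is a bijection of $\{0,\dots,n\}$ this induces a bijection on the collection of all size-$k$ multisets of $\{0,\dots,n\}$. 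Since this bijection satisfies $\sum_{x\in M}\pi(X,x)=\sum_{y\in\pi(X,M)}y$, reindexing gives
$$\sum_{M}q^{\sum_{x\in M}\pi(X,x)}=\sum_{M'}q^{\sum_{y\in M'}y}=\qbinom{n+k}{k}_q,$$
the last equality being the standard fact that $\qbinom{n+k}{k}_q$ is the generating function for size-$k$ multisets of $\{0,\dots,n\}$ (equivalently, partitions in a $k\times n$ box) weighted by the sum of their elements.

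Finally I would reassemble: since the inner sum equals $\qbinom{n+k}{k}_q$ regardless of $T$, it factors out of the sum over linear extensions, leaving $q^{\binom{k}{2}}\qbinom{n+k}{k}_q\sum_{T\in\Lin(P)}q^{\comaj(T)}$, as claimed. The step I expect to require the most care is the bookkeeping in the first paragraph: verifying that the weighted descent count in the second factor of $\vartheta$ collapses exactly to $\sum_i\#\{j\in\Des(T):j<t_i\}$, with special attention paid to descents that coincide with some $t_i$ value, since these sit on the boundary of the intervals $(t_{i-1},t_i)$. Once the exponent has been identified with $\sum_i\pi(\Des(T),t_i)$, the multiset reindexing is immediate, and the independence from $\Des(T)$ — which is precisely what allows the inner sum to factor out — is exactly the content of \cref{lem:PermLemma}.
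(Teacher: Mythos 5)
Your proposal is correct and follows essentially the same route as the paper: both factor out $q^{\comaj(T)+\binom{k}{2}}$, identify the remaining exponent with $\sum_{i}\pi(\Des(T),t_i)$ (the content of \cref{eq:q-Lin_dist}), invoke \cref{lem:PermLemma} to reindex the weakly increasing sum so that it becomes $\sum_{0\le t_1\le\cdots\le t_k\le n}q^{t_1+\cdots+t_k}=\qbinom{n+k}{k}_q$, and then reassemble. The only differences are presentational: you expand the derivation of the closed form for $\vartheta$ that the paper simply cites from its remark, and you phrase the reindexing as an explicit bijection on multisets rather than the paper's informal appeal to the permutation property.
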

\begin{proof}
By Remark \ref{eq:q-Lin_dist} we have, in the notation of Lemma~\ref{lem:PermLemma}, that $$\vartheta(T,\{t_1,\dots,t_{k}\})=q^{\comaj(T)+\binom{k}{2}}\prod_{i=1}^k q^{\pi(D(T),t_i)}.$$ 

For a fixed $T$ we thus get 
\begin{align}
\sum_{0\le t_1\le\dots\le t_k\le n} \vartheta(T,\{t_1,\dots,t_{k}\})&=q^{\comaj(T)+\binom{k}{2}} \sum_{0\le t_1\le\dots\le t_k\le n}\prod_{i=1}^k q^{\pi(D(T),t_i)}
\\
&=q^{\comaj(T)+\binom{k}{2}} \sum_{0\le t_1\le\dots\le t_k\le n}\prod_{i=1}^k q^{t_i},
\end{align}
where the last equality follows from the fact that $\pi(X,t)$ forms a permutation and when summing over all ways of choosing an $\ell$-subset with repetition it does not matter which permutation it is.

We can express $\displaystyle\sum_{0\leq t_1\leq \cdots \leq t_{k}\leq n}q^{t_1+\cdots + t_{k}}$ in terms of reverse $P$-partitions on the rectangular shape $1 \times k$ with entries bounded by $n$. As is well-known (see, e.g., \cite{stanley1999ec2}) we have
$$\sum_{0\leq t_1\leq \cdots \leq t_{k}\leq n}q^{t_1+\cdots + t_{k}} = \sum_{\pi \in \RPP_n(1\times k)}q^{|\pi|} = \qbinom{k+n}{k}_q.$$
\end{proof}

As a consequence, we can now define $\mu_{\Lin}^q$ in terms of linear extensions and the comajor index:

\begin{align*}\mu_{\Lin}^q(\mathcal{I}) &= \frac{1}{q^{\binom{k}{2}}\qbinom{n+k}{n}_q\displaystyle\sum_{T \in \Lin(P)}q^{\comaj(T)}}\sum_{\substack{T \in \Lin(P)\\0\leq t_1\leq\cdots\leq t_k\leq n}}\hspace{-5mm}\vartheta(T,\{t_1,\dots,t_k\})\prod_{i=1}^k\chi(T^{-1}(\{1,\dots,t_i\})=I_i)\\
&= \frac{1}{q^{\binom{k}{2}}\qbinom{n+k}{n}_q\displaystyle\sum_{T \in \Lin(P)}q^{\comaj(T)}}\sum_{\substack{S \in \Lin^{+k}(P)\\ \text{$S$ compatible with $\mathcal{I}$}}}q^{\comaj^{+k}(S)}.\end{align*}

Moreover, we have the following computation of the expectation of the product of down degrees.
\begin{thm}\label{thm:ExpectedComaj}
We have
$$\mathbb{E}_{\mu_{\Lin}^q}\left(\prod_{j=1}^{k}\ddeg(I_j)\right) = \frac{\displaystyle\sum_{S \in \Lin^{+k}(P)}q^{\comaj^{+k}(S)}}{q^{\binom{k}{2}}\qbinom{n+k}{n}_q\cdot\displaystyle\sum_{T \in \Lin(P)}q^{\comaj(T)}}.$$
\end{thm}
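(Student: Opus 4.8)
The plan is to compute the expectation directly from the definition of the measure $\mu_{\Lin}^q$ together with the structural remark that exactly $\prod_{j=1}^{k}\ddeg(I_j)$ triples $(T,\bar t,\bar p)$ produce a given multichain $\mathcal{I}$. By definition of expectation,
\[
\mathbb{E}_{\mu_{\Lin}^q}\left(\prod_{j=1}^{k}\ddeg(I_j)\right) = \sum_{\mathcal{I}}\left(\prod_{j=1}^{k}\ddeg(I_j)\right)\mu_{\Lin}^q(\mathcal{I}),
\]
where the sum is over all multichains $\mathcal{I} = (\emptyset \subseteq I_1 \subseteq \cdots \subseteq I_k \subseteq P)$ of order ideals. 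Substituting the closed form for $\mu_{\Lin}^q(\mathcal{I})$ derived just before the theorem statement, the normalizing constant $q^{\binom{k}{2}}\qbinom{n+k}{n}_q\sum_{T}q^{\comaj(T)}$ factors out, so the whole problem reduces to showing that the weighted numerator sum equals $\sum_{S \in \Lin^{+k}(P)}q^{\comaj^{+k}(S)}$.

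The key step is to interchange the order of summation and use Proposition~\ref{Set_Valued_Lin_Bijection}. First I would write the second (cleaner) form of $\mu_{\Lin}^q(\mathcal{I})$, namely the one expressed as $\sum_{S \text{ compatible with }\mathcal{I}}q^{\comaj^{+k}(S)}$ divided by the normalizer. Then
\[
\sum_{\mathcal{I}}\left(\prod_{j=1}^{k}\ddeg(I_j)\right)\!\!\sum_{\substack{S \in \Lin^{+k}(P)\\ S \text{ compatible with }\mathcal{I}}}\!\!\!q^{\comaj^{+k}(S)}
\]
is the numerator we must identify. The subtle point is that a single set-valued linear extension $S$ determines its multichain $\mathcal{I}$ uniquely (via $I_j = T^{-1}(\{1,\dots,t_j\})$ where $(T,\bar t,\bar p)$ is the triple corresponding to $S$), so each $S$ is compatible with exactly one $\mathcal{I}$. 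Hence without the $\ddeg$ weights the double sum would simply collapse to $\sum_{S}q^{\comaj^{+k}(S)}$.

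The main obstacle, and the crux of the argument, is accounting correctly for the factor $\prod_{j}\ddeg(I_j)$. By the Remark preceding the theorem, for each fixed $(T,\bar t)$ there are exactly $\prod_{j=1}^{k}\ddeg(I_j)$ choices of $\bar p$ producing the same $\mathcal{I}$, and each such triple $(T,\bar t,\bar p)$ corresponds under the bijection of Proposition~\ref{Set_Valued_Lin_Bijection} to a \emph{distinct} $S \in \Lin^{+k}(P)$, all sharing the same value $\comaj^{+k}(S) = \comaj^{+k}$ of the triple (since $\vartheta$ depends only on $(T,\bar t)$, not on $\bar p$). Therefore multiplying the weight $q^{\comaj^{+k}(S)}$ attached to one representative by $\prod_{j}\ddeg(I_j)$ is exactly the same as summing $q^{\comaj^{+k}(S')}$ over all the $S'$ that share that $(T,\bar t)$. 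Formally I would reorganize the numerator as a sum over triples $(T,\bar t)$ weighted by $\prod_j \ddeg(I_j)$, recognize this as a sum over all $S \in \Lin^{+k}(P)$ (each $S$ counted once via its unique triple), and conclude that the numerator equals $\sum_{S \in \Lin^{+k}(P)}q^{\comaj^{+k}(S)}$. Dividing by the normalizer yields the claimed formula. The only genuine care needed is to verify that $\comaj^{+k}$ is constant across the $\bar p$-fibre, which is immediate from the independence of $\vartheta$ from $\bar p$ noted in the text.
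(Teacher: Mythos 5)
Your proposal is correct and follows essentially the same route as the paper: expand the expectation from the definition of $\mu_{\Lin}^q$, factor out the normalizing constant, and use the bijection of Proposition~\ref{Set_Valued_Lin_Bijection} to absorb the factor $\prod_{j=1}^{k}\ddeg(I_j)$ as the count of choices of $\overline{p}$ over a fixed $(T,\overline{t})$, so that the numerator collapses to $\sum_{S \in \Lin^{+k}(P)}q^{\comaj^{+k}(S)}$. Your explicit remark that $\comaj^{+k}$ is constant on the $\overline{p}$-fibre is exactly the point the paper leaves implicit in the definition of $\vartheta$, so nothing is missing.
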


\begin{proof}

By definition, we have
\begin{align*}
\mathbb{E}_{\mu_{\Lin}}&\left(\prod_{j=1}^{k}\ddeg(I_j)\right)\\ &= \frac{\displaystyle\sum_{\emptyset\subseteq I_1 \subseteq\cdots \subseteq I_{k}\subseteq P}\sum_{T \in \Lin(P)}\prod_{j=1}^{k}\ddeg(I_j)\cdot\vartheta(T,\{\#I_1,\dots,\#I_{k}\})\cdot\prod_{j=1}^{k}\chi(T^{-1}(\#I_j) = I_j)}{\displaystyle q^{\binom{k}{2}}\qbinom{n+k}{n}_q\cdot\sum_{T \in \Lin(P)}q^{\comaj(T)}}\\
&= \frac{\displaystyle\sum_{S \in \Lin^{+k}(P)}q^{\comaj^{+k}(S)}}{\displaystyle q^{\binom{k}{2}}\qbinom{n+k}{n}_q\cdot\sum_{T \in \Lin(P)}q^{\comaj(T)}},
\end{align*}
with the last equality following from the bijection in \cref{Set_Valued_Lin_Bijection}.
\end{proof}

\begin{remark}
The definitions of $\comaj^{+k}$ and $\mu_{\Lin}^q$ can also be justified probabilistically in terms of set-valued $P$-partitions (due in the plane partition case to Lam and Pylyavskyy \cite{lam2007combinatorial}). In this setting, one can define a probability distribution 
$$\mu_{\leq m}^q(\mathcal{I}) \propto \displaystyle \sum_{\substack{\tau \in \RPP^{+k}_{\leq m}(P) \\ \text{$\tau$ compatible with $\mathcal{I}$}}} q^{|\tau| - k},$$
where $|\tau|$ is the sum of the values of $\tau$. One can then show that $\mu_{\leq m}^q \to \mu_{\Lin}^q$ as $m \to \infty$.
\end{remark}

\section{Equidistribution of Descents}
Another nice property of natural set-valued descents is that they are well-behaved under the conjugation of tableaux.

\begin{thm}\label{thm:Equidistribution}
Let $\lambda$ be a partition of $n$, $k\geq 0$ be a nonnegative integer, and let $X \subseteq [n+k]$. Then the sets
$$\{S \in \SYT^{+k}(\lambda) \st \Des(S) = X\}$$
and
$$\{S \in \SYT^{+k}(\lambda') \st \Des(S) = X\}$$
are in bijection.
\end{thm}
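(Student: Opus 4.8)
The plan is to exhibit an explicit descent-preserving bijection between $\SYT^{+k}(\lambda)$ and $\SYT^{+k}(\lambda')$, modeled on the classical transpose/conjugation map for ordinary standard Young tableaux but upgraded to the set-valued setting via the triple decomposition of \cref{SVSYTTriple}. The key observation is that, by \cref{SVSYTTriple}, a set-valued tableau $S \in \SYT^{+k}(\lambda)$ is equivalent to a triple consisting of an ordinary tableau $S^*$, a weak chain $\emptyset = \lambda_0 \subsetneq \lambda_1 \subseteq \cdots \subseteq \lambda_k \subseteq \lambda_{k+1} = \lambda$ of subshapes, and a choice of corner cell $u_i$ of $\lambda_i$ for each $i$. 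Conjugation acts naturally on all three pieces of data: $S^*$ maps to its transpose $(S^*)'$, each subshape $\lambda_i$ maps to $\lambda_i'$ (which is again a weak chain, since conjugation is an order-isomorphism of Young's lattice restricted to subshapes of $\lambda$), and each corner cell $u_i$ of $\lambda_i$ maps to the corresponding corner cell $u_i'$ of $\lambda_i'$. First I would verify that this transposition of triples is a well-defined involution (up to one composition), so that it gives a bijection $\SYT^{+k}(\lambda) \to \SYT^{+k}(\lambda')$.

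The crux is then to show that this bijection preserves the full descent set $\Des^{+k}(S) = X$. Recall from the definition that $\Des^{+k}(S) = \bigsqcup_i \Des(Y_i) \sqcup \{x_1,\dots,x_k\}$, where the $Y_i$ are the skew shapes $\lambda_i \setminus \lambda_{i-1}$ and the $x_j$ are the extra entries. The extra entries $x_1,\dots,x_k$ are determined by the positions at which the chain $\lambda^\bullet$ strictly grows together with the corner data, and since conjugation sends the subshape $\lambda_i$ to $\lambda_i'$ it preserves exactly which indices are the ``extra'' elements; hence the set $\{x_1,\dots,x_k\}$ is unchanged. The harder part is to show that the skew-descent sets $\bigsqcup_i \Des(Y_i)$ are preserved. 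Here I would invoke the known equidistribution/descent-preservation statement for ordinary (skew) standard Young tableaux under transposition: a descent $j$ of a skew SYT is an entry $j$ such that $j+1$ lies strictly south of $j$, and transposing a skew tableau turns ``strictly south'' into ``strictly east,'' so one must be careful that the paper's $P$-partition-flavored convention for descents (inherited from the natural labeling in \cref{Poset-Section}) is the one compatible with conjugation. This is exactly the content of \cref{thm:ExpectedComaj}'s setup: natural descents are defined via the poset order, and conjugation of the Ferrers poset $P(\lambda)$ yields $P(\lambda')$ with the dual natural labeling, under which the natural-descent condition $S^{-1}(j+1) < S^{-1}(j)$ is manifestly preserved.

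The main obstacle I anticipate is matching conventions carefully enough that the descent set is preserved \emph{on the nose} for each $X$, not merely equidistributed. In particular, the footnote in the paper warns that the definition of descent used here (coming from $P$-partition theory) differs from the usual tableau descent, so I would need to check that under transposition the natural-descent statistic is genuinely invariant rather than complemented or otherwise transformed. Concretely, I would argue that a natural descent at $j$ means the cell containing $\min S^{-1}(j+1)$ is incomparable-or-larger in the way that makes $j+1$ sit in a strictly higher row, and that after transposition the analogous condition on $\lambda'$ holds with the roles of rows and columns swapped but the comparison in the poset order identical; because the two natural labelings are conjugate, the relation $S^{-1}(j+1) < S^{-1}(j)$ transfers verbatim. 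Once this is established for the ordinary part $S^*$, combined with the invariance of the extra-entry set $\{x_1,\dots,x_k\}$ and the compatibility of the chain decomposition with conjugation, the bijection restricts to a bijection between the two fibers over any fixed $X$, completing the proof.
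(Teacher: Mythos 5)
There is a genuine gap: the central claim that transposing the triple $(S^*,\lambda^{\bullet},(u_i))$ preserves the natural descent set is false, and the argument collapses without it. Take $k=0$ and $\lambda=(2,1)$, and let $S$ be the tableau with first row $1,2$ and with $3$ below the $1$. Then $\Des(S)=\emptyset$ (neither $2$ nor $3$ lies in a strictly higher row than its predecessor), while the transposed tableau has first row $1,3$ and $2$ below the $1$, where $3$ lies in a higher row than $2$, so its descent set is $\{2\}$. The step where you say the relation $\omega(S^{-1}(j+1))<\omega(S^{-1}(j))$ ``transfers verbatim'' is exactly where this breaks: conjugation replaces the row-reading natural labeling $\omega$ of $P(\lambda)$ by a genuinely different natural labeling of the isomorphic poset $P(\lambda')$, and inequalities between labels are not invariant under that change. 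Concretely, if $j+1$ sits strictly south and strictly west of $j$ (not a natural descent), then after transposition it sits strictly north and strictly east of $j$ (a natural descent). The theorem is nevertheless true because the two fibers over each fixed $X$ have equal cardinality even though no individual tableau is matched with its transpose.

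The paper's proof is accordingly not a bijection by transposition: it invokes the fact from Stanley (EC1, Section 3.13) that the number of linear extensions of a finite poset with prescribed descent set is determined by the flag $h$-vector and is therefore independent of the chosen natural labeling; since each skew layer $I_j\setminus I_{j-1}$ of $\lambda$ and the corresponding layer of $\lambda'$ are isomorphic posets with different natural labelings, the counts agree layer by layer, and the extra entries contribute the same factor $\prod_j \ddeg(I_j)$ on both sides. Your treatment of the chain $\lambda^{\bullet}$, the corner cells, and the invariance of $\{x_1,\dots,x_k\}$ matches the paper's decomposition and is fine; what must replace the transposition step is this labeling-independence (or some substitute for it). Indeed, the paper explicitly lists a bijective proof of this theorem as an open question and notes that even the two-row skew case requires a different construction, which is a strong signal that no naive conjugation map can work.
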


\begin{proof}
Let $\mu_1/\mu_2$ be a skew shape. We recall that as posets, $\mu_1/\mu_2$ and $(\mu_1/\mu_2)'$ are isomorphic and indeed differ only by choosing different natural labelings. From \cite[Section 3.13]{stanley1996ec1}, we know that the number of linear extensions of a poset $P$ with descent set $X$ is determined by the flag $h$-vector of $P$, and hence is independent of the choice of natural labeling of $P$. 

Let $S \in \SYT^{+k}(\lambda)$, and let $S^*$ be the filling of $\lambda$ given by $S^*(c) = \min S(c)$ for all $c$. Let $t_1,\dots, t_k$ be the non-minimal entries of $S$ in increasing order. The descents of $S$ consist of the descents of $S^*$ along with all of the $t_i$. As in the proof of~\cref{Set_Valued_Lin_Bijection}, we can use the $t_i$ to decompose $\lambda$ into a sequence of skew shapes: we define $I_j$ to be $(S^*)^{-1}(\{1,\dots,t_j\})$, and then we can write 
$$\lambda = I_1 \sqcup (I_2\setminus I_1)\sqcup \cdots \sqcup (I_k\setminus I_{k-1}) \sqcup (\lambda \setminus I_k).$$

Now, every descent of $S^*$ belongs to some $(I_j \setminus I_{j-1})$ (taking $I_0 = \emptyset$ and $I_{k+1} = \lambda$) in the sense that the descent top and descent bottom both appear in $S^*(I_j\setminus I_{j-1})$.

So, suppose $\Des(S) = X_1 \sqcup \cdots \sqcup X_{k+1} \sqcup \{t_1,\dots,t_k\}$, where $X_j$ are those descents appearing in $I_j \setminus I_{j-1}$. We can think of $\left.S^*\right|_{I_j\setminus I_{j-1}}$ as a shifted linear extension of $I_j\setminus I_{j-1}$. By our first observation, the (shifted) linear extensions of $I_j\setminus I_{j-1}$ are in bijections with the linear extensions of $(I_j\setminus I_{j-1})'$ (shifted by the same amount). Since this holds for all $j$, we see that the injective maps $S^*: \lambda \to [n+k]$  with descent set $X_1 \sqcup \cdots \sqcup X_{k+1}$ are in bijection with the injective maps $T^*: \lambda' \to [n+k]$ with descent set $X_1 \sqcup \cdots \sqcup X_{k+1}$. To go from $S^*$ to an $S \in \SYT^{+K}(\lambda)$ with descent set $X$, we insert each $t_i$ into a maximal cell of $I_i$ in $\prod \ddeg(I_j)$ ways. Similarly, to go from $T^*$ to a $T \in \SYT^{+k}(\lambda')$, we insert each $t_i$ into a maximal cell of $(I_i)'$ in $\prod \ddeg(I_j)$ ways. This completes the proof. 
\end{proof}
\section{Future Work}
\subsection{q-Catalan and q-Narayana}\label{qCatSect}
Given the well-behaved numerology for $\SYT^{+k}(2\times b)$, it seems natural to consider the following $q$-analogs:
$$\overset{\sim}{\cat}_n(q) \coloneqq \sum_{2b+k = n+1}\left(\sum_{S \in \SYT^{+k}(2\times b)}q^{\comaj^{+k}(S)}\right)$$
and
$$\overset{\sim}{N_{n,m}}(q) \coloneqq \sum_{2b +k = n+1}\left(\sum_{\substack{S \in \SYT^{+k}(2 \times b)\\m\text{ elts in top row}}}q^{\comaj^{+k}(S)}\right).$$

Using the bijection in Theorem \ref{thm:motzkin_bij} and a double recursion we can compute the polynomials $\overset{\sim}{\cat}_q$ and $\overset{\sim} {N_{n,m}}(q)$ for small values of $n,m$. They do not seem to match any statistic we have found in the literature.

\begin{figure}
\begin{center}
\begin{tabular}{c|c}
$n$ & $\overset{\sim}{\cat}_n(q)$\\
\hline
$1$ & $1$\\
\hline
$2$ & $q+1$\\
\hline
$3$ & $q^3 + 2q^2 + q + 1$\\
\hline
$4$ & $q^6 + 2q^5 + 3q^4 + 3q^3 + 2q^2 + 2q + 1$\\
\hline
$5$ & $q^{10} + 2q^9 + 3q^8 + 7q^7 + 6q^6 + 5q^5 + 6q^4 + 7q^3 + 3q^2 + q + 1$
\end{tabular}
\end{center}
\caption{The first six $\overset{\sim}{\cat}_n(q)$ polynomials.}
\end{figure}

\begin{figure}
\begin{center}
\begin{tabular}{>{\bfseries}c|c|c|c|c}
n/m & \textbf{1} & \textbf{2} & \textbf{3} & \textbf{4} \\
\hline
1 & $1$ &  &  &  \\
\hline
2 & $1$ & $q$ & & \\
\hline
3 & $q$ & $2q^2+1$ & $q^3$ &  \\
\hline
4 & $q^3$ & $2q^4 + q^3 + q^2 + q + 1$ & $2q^5 + q^4 + q^3 + q^2 + q$ & $q^6$ \\
\end{tabular}
\end{center}
\caption{$\overset{\sim}{N_{n,m}}(q)$ for $2\leq n \leq 5$.}
\end{figure}

\begin{question}
Are there better formulas for $\overset{\sim}{\cat}_n(q)$ and $\overset{\sim}{N_{n,m}}(q)$?
\end{question}

\subsection{Expected Number of Columns}

\begin{question}
The proofs of the expected value computations in Section~\ref{Sec:GFun} rely on algebraic manipulation of generating functions. Are there bijective proofs?
\end{question}

\begin{question}
Is there a nice formula for the $q$-version? Specifically, is there a better formula for
$$\mathbb{E}_q\left(\text{\# of columns of $S$}\right) = \frac{\displaystyle\sum_{2b+k=n+1}b\cdot\left(\sum_{S \in \SYT^{+k}(2\times b)}q^{\comaj^{+k}(S)}\right)}{\overset{\sim}{\cat_n}(q)}?$$
\end{question}
\subsection{Equidistribution of descents}

\begin{question}
    Is there a bijective proof of Theorem \ref{thm:Equidistribution}? 
\end{question}
    For skew shapes with two rows there is a reasonably easy bijection. One can fix all descent tops and descent bottoms, then there is a unique way to fill in the other elements without creating any other descents. But for skew shapes with more rows this is no longer true and it would be interesting to see a bijective proof of the equidistribution, that could hopefully shed some more light on this property.

\printbibliography

\end{document}